\definecolor{trp}{rgb}{1,1,1}
\definecolor{red}{rgb}{1,0,.2}
\newtheorem{theorem}{Theorem}[section]
\theoremstyle{plain}
\newtheorem{acknowledgement}{Acknowledgment}
\newtheorem*{condition*}{Condition}
\newtheorem{cor}[theorem]{Corollary}
\newtheorem{definition}[theorem]{Definition}
\newtheorem{lemma}[theorem]{Lemma}
\newtheorem{prop}[theorem]{Proposition}
\newtheorem{remark}{Remark}
\numberwithin{equation}{section}
\newcommand{\R}{\mathbb{R}}
\newcommand{\ii}{\mathbf{i}}
\newcommand{\il}{\overline{\imath}}
\newcommand{\jl}{\overline{\jmath}}
\newcommand{\jj}{\mathbf{j}}
\newcommand{\spt}{\mathrm{spt}}
\newcommand{\essinf}{\mathrm{essinf}}
\newcommand{\x}{\underline{x}}
\newcommand{\y}{\underline{y}}
\newcommand{\n}{\underline{n}}
\newcommand{\z}{\underline{z}}
\newcommand{\vv}{\underline{v}}
\begin{document}
\title[Non-linear projections of self-similar sets]{On some non-linear projections of self-similar sets in $\R^3$}

\author{Bal\'azs B\'ar\'any}
\address[Bal\'azs B\'ar\'any]{Budapest University of Technology and Economics, BME-MTA Stochastics Research Group, P.O.Box 91, 1521 Budapest, Hungary \&
Mathematics Institute, University of Warwick, Coventry CV4 7AL, UK}
\email{balubsheep@gmail.com}

\thanks{The research of B\'ar\'any was supported by the grants EP/J013560/1 and OTKA K104745.}

\begin{abstract}
In the last years considerable attention has been paid for the orthogonal projections and non-linear images of self-similar sets. In this paper we consider homothetic self-similar sets in $\R^3$, i.e. the generating IFS has the form $\left\{\lambda_i\underline{x}+\underline{t}_i\right\}_{i=1}^q$. We show that if the dimension of the set is strictly bigger than $1$ then the image of the set under some non-linear functions onto the real line has dimension $1$. As an application, we show that the distance set of such self-similar sets has dimension $1$. Moreover, the third algebraic product of a self-similar set with itself on the real line has dimension $1$ if its dimension is at least $1/3$.
\end{abstract}

\subjclass[2010]{Primary 28A80, Secondary 28A78, 37C45}
\keywords{Self-similar set, Hausdorff dimension, projection, distance set, algebraic product of sets.}

\maketitle

\thispagestyle{empty}

\section{Introduction and Statements}

\ \ We call a non-empty compact set $\Lambda$ self-similar in $\R^d$ if there exists an iterated function system (IFS) $\Phi$ of the form

\begin{equation}\label{edefIFS}
	\Phi=\left\{f_i(\underline{x})=\lambda_iO_i\underline{x}+\underline{t}_i\right\}_{i=1}^q,
\end{equation}
where $\lambda_i\in(0,1)$, $\underline{t}_i\in\R^d$ and $O_i$ is an orthogonal transformation of $\R^d$ for every $i=1,\dots,q$, and $\Lambda$ is the attractor of $\Phi$, i.e. the unique non-empty compact set $\Lambda=\bigcup_{i=1}^qf_i(\Lambda)$. We call a measure $\mu$ self-similar if there exists an IFS $\Phi$ in the form \eqref{edefIFS} and a probability vector $(p_1,\dots,p_q)$ such that $\mu=\sum_{i=1}^qp_i(f_i)_*\mu$, where $(f)_*\mu=\mu\circ f^{-1}$.

Let us denote the set of orthogonal projections from $\R^d$ to $\R^k$ by $\Pi_{d,k}$. The classical results of Marstrand~\cite{Mar} and Kaufman~\cite{K} states that for any $A\subseteq\R^d$ Borel set $\dim_H\pi A=\min\left\{k,\dim_HA\right\}$ for almost every $\pi\in\Pi_{d,k}$, where $\dim_H$ denotes the Hausdorff dimension. Let us denote the packing dimension by $\dim_P$ and the box dimension by $\dim_B$. For the definition and basic properties of Hausdorff, packing and box dimension we refer to \cite{F}.

Hochman and Shmerkin \cite{HS} proved that if the IFS $\Phi$ satisfies the strong separation condition (SSC), i.e. $f_i(\Lambda)\cap f_j(\Lambda)=\emptyset$ for every $i\neq j$ and the orthogonal transformations of the IFS $\Phi$ satisfies a minimality assumption, that is there exists a $\pi\in\Pi_{d,k}$ such that
\begin{equation}\label{eminimality}
\bigcup_{n=1}^{\infty}\left\{\pi O_{i_1}\cdots O_{i_n}:1\leq i_1,\dots,i_n\leq q\right\}
\end{equation}
is dense in $\Pi_{d,k}$, then $\dim_H\pi\Lambda=\min\left\{k,\dim_H\Lambda\right\}$ for every $\pi\in\Pi_{d,k}$, moreover, $\dim_Hg(\Lambda)=\min\left\{k,\dim_H\Lambda\right\}$ for every $g\in C^1(\R^d\mapsto\R^k)$ without singular points. In particular, if the minimality assumption holds then \eqref{eminimality} holds for all $\pi\in\Pi_{d,k}$. Recently, Farkas \cite{Fa} generalized this result by omitting the strong separation condition.

Dekking~\cite{D}, Rams and Simon~\cite{RS1,RS2}, Falconer and Jin~\cite{FJ} considered the orthogonal projections and non-linear images of random self-similar sets. For more detailed surveys on projections of fractal sets and measures, see \cite{FFJ} or \cite{Sh}.

In this paper, we focus on \textit{homothetic self-similar sets} (HSS set) in $\R^3$, which is $O_i=I$ for every $i=1,\cdots,q$, where $I$ denotes the identity. Similarly, we consider homothetic self-similar measures (HSS measure).

It is well known fact that in this case the dimension may drop under some orthogonal projections. However, if $\Lambda$ is a HSS set with SSC on $\R^2$ then $\dim_Hg(\Lambda)=\min\left\{1,\dim_H\Lambda\right\}$ for certain $g:\R^2\mapsto\R$ $C^2$ functions. This result was first published in the paper of Bond, \L aba and Zahl~\cite[Proposition~2.6]{BLZ}, but they attribute the proof to Hochman.

Our goal is to generalize this result for HSS sets in $\R^3$, at least in the case when $\dim_H\Lambda$ is large enough.

During the paper we will have a special interest on the radial projection $P_d:\R^d\setminus\{\underline{0}\}\mapsto S^{d-1}$, where $S^{d-1}$ denotes the unit sphere in $\R^d$. Precisely, $P_d(\x)=\frac{\x}{\|\x\|}$. For simplicity, denote the gradient vector of a function $g:\R^d\mapsto\R$ at a point $\x$ by $\nabla_{\underline{x}}g$. 

\begin{theorem}\label{tmain}
	Let $\Lambda$ be an homothetic self-similar set in $\R^3$ such that $\dim_H\Lambda>1$ and $\Lambda$ is not contained in any plane (but not necessarily satisfying SSC). Suppose that $g:\R^3\mapsto\R$ is a $C^1$ function on a $V\supseteq\Lambda$ open set such that
	\begin{enumerate}
		\item $\|\nabla_{\underline{x}}g\|\neq0$ for every $\underline{x}\in\Lambda$,\label{tcond1}
		\item $\|\nabla_{t\cdot\underline{x}}g\times\nabla_{\underline{x}}g\|=0$ for every $\underline{x}\in V$ and for every $t\in\R$ such that $t\cdot\underline{x}\in V$.\label{tcond2}
		\item The function $h_g$ is bi-Lipschitz on $P_3(\Lambda)\subseteq S^2$, where $h_g(\x)=P_3(\nabla_{t\cdot\x}g)$ for any $t\in\R$ such that $t\cdot\x\in V$.\label{tcond3}
		
	\end{enumerate}
Then $\dim_Hg(\Lambda)=1$.
\end{theorem}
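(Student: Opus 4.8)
The upper bound is immediate, since $g(\Lambda)\subseteq\R$ forces $\dim_Hg(\Lambda)\le 1$; the whole content is the lower bound $\dim_Hg(\Lambda)\ge 1$. The plan is to pass to a self-similar measure and reduce the problem to a statement about orthogonal projections of that measure. First I would fix a homothetic self-similar measure $\mu$ supported on $\Lambda$ with $\dim_H\mu>1$ (such a $\mu$ exists because $\dim_H\Lambda>1$ and the dimension of a homothetic self-similar set is approximated from below by dimensions of self-similar measures). Since $\dim_Hg(\Lambda)\ge\dim_Hg_*\mu$ and $g_*\mu$ is a measure on the line, it suffices to prove $\dim_Hg_*\mu=1$.

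The geometric heart is a local linearisation on cylinder sets. Writing $f_w=f_{i_1}\circ\cdots\circ f_{i_n}$, the piece $f_w(\Lambda)$ is a scaled copy of $\Lambda$ of diameter comparable to $\rho_w=\lambda_{i_1}\cdots\lambda_{i_n}$, centred near a point $z_w$. By condition~\eqref{tcond1} and the $C^1$ regularity of $g$, on $f_w(\Lambda)$ the map $g$ is uniformly close to the affine map $x\mapsto g(z_w)+\langle\nabla_{z_w}g,\,x-z_w\rangle$, with error $o(\rho_w)$ relative to a main term of size $\asymp\rho_w$. Condition~\eqref{tcond2} guarantees that the \emph{direction} of $\nabla_{z_w}g$ depends only on the ray through $z_w$, namely it equals $v_w:=h_g(P_3(z_w))=P_3(\nabla_{z_w}g)$. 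Thus, after rescaling, $g_*\mu$ restricted to $f_w(\Lambda)$ looks like the orthogonal projection of $\mu$ in the direction $v_w$, and the admissible directions, as $w$ ranges over all words, fill out a dense subset of $h_g(P_3(\Lambda))$.

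This reduces matters to two inputs. The first is a projection theorem for the self-similar measure $\mu$: the exceptional set $E:=\{v\in S^2:\dim_H(\mathrm{proj}_v\mu)<1\}$ has Hausdorff dimension $0$. The second is that the distribution of local directions does not concentrate on $E$. I would set $\nu:=(h_g\circ P_3)_*\mu$ and argue $\dim_H\nu>0$: because $\Lambda$ is not contained in any plane (in particular in no plane through the origin) the radial projection does not collapse $\mu$ onto a great circle, so $(P_3)_*\mu$ has positive dimension, and condition~\eqref{tcond3} (that $h_g$ is bi-Lipschitz on $P_3(\Lambda)$) preserves this. A measure of positive lower Hausdorff dimension assigns zero mass to the dimension-$0$ set $E$, whence $v_x=h_g(P_3(x))\notin E$, i.e.\ $\dim_H(\mathrm{proj}_{v_x}\mu)=1$, for $\mu$-almost every $x$.

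Finally I would assemble the conclusion through the local-entropy-averages machinery of Hochman and Shmerkin~\cite{HS}: for a self-similar $\mu$ and a $C^1$ map without singular points, $\dim_Hg_*\mu$ is bounded below by the $\mu$-average of the local projection dimensions computed in the pointwise gradient directions $v_x$. Since these equal $1$ almost everywhere and projections to $\R$ never exceed $1$, this yields $\dim_Hg_*\mu=1$ and hence $\dim_Hg(\Lambda)=1$. \textbf{The main obstacle} is precisely this last assembly: one must run the scale-by-scale linearisation uniformly, so that the local behaviour of $g$ is genuinely governed by the orthogonal projection $\mathrm{proj}_{v_x}$ at every scale with $C^1$-errors that remain negligible against the main term. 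This is where condition~\eqref{tcond3} is essential, as it keeps the direction field $x\mapsto v_x$ Lipschitz and the accumulated errors controllable; a secondary difficulty is establishing the two quantitative inputs above, namely $\dim_HE=0$ and $\dim_H\nu>0$.
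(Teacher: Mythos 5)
Your skeleton is the right one — pass to a self-similar measure, use the factorisation $\x\mapsto P_3(\nabla_{\x}g)=h_g(P_3(\x))$ coming from conditions \eqref{tcond2}--\eqref{tcond3}, bound $\dim_Hg_*\mu$ below by $\mu\text{-}\essinf_{\x}\dim_H\pi_{g,\x}\mu$, and kill the exceptional directions — but your two quantitative inputs are wrong, and the error is fatal. The claim $\dim_HE=0$ for $E=\left\{\n\in S^2:\dim_H\pi_{\n}\mu<\min\left\{1,\dim_H\mu\right\}\right\}$ is not available in $\R^3$: what Hochman's inverse theorem in $\R^d$ \cite{H3} gives, and what the paper's Proposition~\ref{texcept} states, is only $\dim_PE\leq1$. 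The dimension-zero exceptional bound is a planar phenomenon (\cite[Theorem~1.8]{H2}, used in Proposition~\ref{pBLZ}): there the direction parameter is one-dimensional, and each near-overlap condition $\Delta_{\il,\jl}\approx0$ cuts out a zero-dimensional set; on the two-dimensional parameter space $S^2$ each $\Delta_{\il,\jl}$ is a single real-valued function whose small-value sets are neighbourhoods of curves, so the natural bound is codimension one, i.e.\ dimension $1$, not $0$.

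Once $\dim_PE\leq1$ is the true input, your second step collapses: to conclude $f_*\mu(E)=0$ for $f(\x)=h_g(P_3(\x))$ you need $\dim_Hf_*\mu>1$, hence (by condition~\eqref{tcond3}) $\dim_H(P_3)_*\mu>1$ \emph{strictly}, and your soft argument — ``$\Lambda$ is in no plane, so the radial projection does not collapse onto a great circle'' — gives at best positive dimension and has no mechanism to push past $1$. Producing an HHSS measure $\mu$ with $\spt\mu\subseteq\Lambda$ and $\dim_H(P_3)_*\mu>1$ is precisely the main technical content of the paper (Theorem~\ref{tradproj3}, occupying all of Section~3): Proposition~\ref{pgoodmeasure} constructs $\mu$ and a $\pi\in\Pi_{3,2}$ with an engineered exact overlap so that $\pi\mu$ satisfies the SSC and $\dim_H\mu>\dim_H\pi\mu>1$; dimension conservation then gives the conditional measures of $(P_3)_*\mu$ on the geodesic fibres dimension at least $\dim_H\mu-\dim_H\pi\mu>0$ (Lemma~\ref{lcondmeasure}), the planar radial result Corollary~\ref{cradproj} gives $\dim_H\gamma_*(P_3)_*\mu=1$, and \cite[Lemma~6.13]{H1} sums these to exceed $1$. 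Nothing in your proposal substitutes for this construction. Two further, smaller corrections: you cannot invoke the Hochman--Shmerkin $C^1$-image theorem \cite{HS} here, since its minimality hypothesis on the rotation semigroup fails identically for homothetic systems ($O_i=I$); the paper instead uses Theorem~\ref{tHochman} (Hochman's fractal-distribution bound, valid for HSS measures with SSC — which is also why $\mu$ must be taken with SSC, not an arbitrary self-similar measure of dimension $>1$). And the ``main obstacle'' you flag, the uniform scale-by-scale linearisation, is not where the difficulty lies: it is already packaged inside Theorem~\ref{tHochman}. The genuine missing ideas are the correct exceptional-set bound ($\leq1$, not $0$) and the construction forcing $\dim_H(P_3)_*\mu>1$ to beat it.
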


We apply Theorem~\ref{tmain} in two ways. First, we show a corollary for the distance set of HSS sets in $\R^3$. Let us denote the {\it distance set} of $A\subset\R^d$ by $D(A)$. That is,
\begin{equation}\label{edistset}
D(A)=\left\{\|\underline{x}-\underline{y}\|:\underline{x},\underline{y}\in A\right\}.
\end{equation}
For every $\underline{x}\in A$, we define the {\it pinned distance set} of $A\subset\R^d$ at the point $\underline{x}$ by
\begin{equation}\label{edistset2}
D_{\underline{x}}(A)=\left\{\|\underline{x}-\underline{y}\|:\underline{y}\in A\right\}.
\end{equation}
Falconer's distance set conjecture states that if $\dim_HA>d/2$ then $D(A)$ has positive Lebesgue measure for any measurable $A\subseteq\R^d$. Recently, Orponen \cite{O} showed that for any self-similar set $\Lambda$ in $\R^2$ if $\mathcal{H}^1(\Lambda)>0$ then  $\dim_HD(\Lambda)=1$, where $\mathcal{H}^1$ denotes the Hausdorff measure. We improve Orponen's result for HSS sets in $\R^3$ in the following way.

\begin{theorem}\label{tdistance}
Let $\Lambda$ be an HSS set in $\R^3$ such that $\dim_H\Lambda>1$. Then for every $\underline{x}\in\Lambda$, $\dim_HD_{\underline{x}}(\Lambda)=1$. In particular, $\dim_HD(\Lambda)=1$.
\end{theorem}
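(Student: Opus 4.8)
The plan is to realise the pinned distance set as a non-linear image of $\Lambda$ and feed it into Theorem~\ref{tmain}. Fix $\x\in\Lambda$ and translate coordinates so that $\x$ becomes the origin; since a translate of a homothetic IFS is again a homothetic IFS with the same contraction ratios, the translated set $\Lambda_0:=\Lambda-\x$ is again an HSS set with $\dim_H\Lambda_0=\dim_H\Lambda>1$, it is not contained in a plane whenever $\Lambda$ is not, and $\underline{0}\in\Lambda_0$. In these coordinates the pinned distance set is exactly $g(\Lambda_0)$ for $g(\z)=\|\z\|$, whose gradient $\nabla_{\z}g=\z/\|\z\|=P_3(\z)$ is the radial projection itself. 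As $g(\Lambda_0)\subseteq[0,\mathrm{diam}\,\Lambda]\subset\R$, the upper bound $\dim_Hg(\Lambda_0)\le 1$ is automatic, so only the lower bound is at stake.

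The difficulty is that $g$ fails to be $C^1$ at $\underline{0}\in\Lambda_0$, so Theorem~\ref{tmain} cannot be applied to $\Lambda_0$ directly. I would remove this singularity by passing to a cylinder: since $\Lambda$ is infinite, pick $\y\in\Lambda_0\setminus\{\underline{0}\}$, and choose a word $w$ long enough that every level-$|w|$ cylinder has diameter below $\|\y\|$; the cylinder $\Lambda_0':=f_w(\Lambda_0)$ containing $\y$ then cannot contain $\underline{0}$, so it stays a positive distance from the origin. Thus $\Lambda_0'$ is a homothety copy of $\Lambda_0$, hence an HSS set with $\dim_H\Lambda_0'=\dim_H\Lambda>1$, not contained in a plane, and $g$ is $C^\infty$ on a neighbourhood $V\supseteq\Lambda_0'$ with $\underline{0}\notin V$. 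On $\Lambda_0'$ the three hypotheses of Theorem~\ref{tmain} hold immediately: the gradient is a unit vector, so $\|\nabla_{\z}g\|=1\neq0$; the gradient along a ray, $\nabla_{t\z}g=\mathrm{sign}(t)\,P_3(\z)$, is parallel to $\nabla_{\z}g$, so the cross product vanishes; and for the admissible (positive) parameters $h_g(\omega)=P_3(\nabla_{t\omega}g)=\omega$ is the identity on $P_3(\Lambda_0')$, hence bi-Lipschitz. Theorem~\ref{tmain} then yields $\dim_Hg(\Lambda_0')=1$, and since $g(\Lambda_0')\subseteq g(\Lambda_0)=D_{\x}(\Lambda)$ we obtain $\dim_HD_{\x}(\Lambda)\ge 1$, whence equality. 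Finally $D_{\x}(\Lambda)\subseteq D(\Lambda)$ gives $\dim_HD(\Lambda)=1$.

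The genuine obstacle is the degenerate case in which $\Lambda$ is contained in a plane, since then Theorem~\ref{tmain} is unavailable (its conclusion rests on $\Lambda$ not lying in a plane). Here I would fall back on the two-dimensional theory: a planar HSS set with $\dim_H\Lambda>1$ satisfies $\mathcal{H}^1(\Lambda)=\infty>0$, so the distance function restricted to the plane is a curvy $C^2$ map and the planar counterpart of Theorem~\ref{tmain}, namely the result of Bond--\L aba--Zahl and Hochman (in its separation-free form), gives $\dim_HD_{\x}(\Lambda)=1$; for the non-pinned statement Orponen's theorem already yields $\dim_HD(\Lambda)=1$ directly. I expect this planar reduction, rather than the (essentially automatic) verification of the hypotheses of Theorem~\ref{tmain}, to be the only delicate point.
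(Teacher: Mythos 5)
Your proposal is correct and takes essentially the same route as the paper: the paper likewise dispatches the planar case via Corollary~\ref{cdist} (or Orponen's theorem), shifts so that the pin point becomes $\underline{0}\in\Lambda$, picks a cylinder set $\Lambda_{\il}$ with $\mathrm{dist}(\underline{0},\Lambda_{\il})>0$, and applies Theorem~\ref{tmain} to $g(\underline{x})=\|\underline{x}\|$. The only difference is cosmetic: you spell out the verification of hypotheses \eqref{tcond1}--\eqref{tcond3} and the choice of the cylinder avoiding the origin, which the paper asserts without detail.
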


As a second application, we consider the algebraic product of a self-similar set on the real line with itself. Let $A,B\subset\R$ and denote $A\cdot B$ the algebraic product $A$ and $B$, that is,
$$A\cdot B=\left\{x\cdot y:x\in A\text{ and }y\in B\right\}.$$

As a consequence of the result of Bond, \L aba and Zahl \cite{BLZ} we show that for every $\Lambda$ self-similar set on the real line
\begin{equation}\label{eprod2}
\dim_H\Lambda\cdot\Lambda=\min\left\{2\dim_H\Lambda,1\right\},
\end{equation}
see Corollary~\ref{cprod2}. We generalize this result for $\Lambda\cdot\Lambda\cdot\Lambda$ in the following way.

\begin{theorem}\label{tprod}
	Let $\Lambda$ be a self-similar set in $\R$ such that $\dim_H\Lambda>1/3$. Then $\dim_H\Lambda\cdot\Lambda\cdot\Lambda=1$.
\end{theorem}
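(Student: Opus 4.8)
The strategy is to write the triple product as a non-linear image and apply Theorem~\ref{tmain}. Put $g(\underline{x})=xyz$ for $\underline{x}=(x,y,z)\in\R^3$, so that $g(\Lambda\times\Lambda\times\Lambda)=\Lambda\cdot\Lambda\cdot\Lambda$. Since $\Lambda\cdot\Lambda\cdot\Lambda\subseteq\R$ we get $\dim_H\Lambda\cdot\Lambda\cdot\Lambda\le1$ for free, so only the lower bound is at stake, and it suffices to produce a subset of dimension $1$. The obstruction to applying Theorem~\ref{tmain} directly to $\Lambda\times\Lambda\times\Lambda$ is that, unless the ratios $\lambda_i$ of the generators $f_i(x)=\lambda_i x+t_i$ are all equal, the product IFS $\{(x,y,z)\mapsto(f_i(x),f_j(y),f_k(z))\}$ consists of genuinely self-affine maps, so $\Lambda^3$ is not an HSS set.

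The first, and main, step is therefore to reduce to the homogeneous case. Fix a composition $(a_1,\dots,a_q)$ with $\sum_i a_i=L$ and let $\Lambda_0$ be the attractor of the subsystem consisting of all maps $f_{w_1}\circ\cdots\circ f_{w_L}$ over words $w$ that contain each letter $i$ exactly $a_i$ times. Every such map has the same contraction ratio $\rho=\prod_i\lambda_i^{a_i}$, so $\Lambda_0\subseteq\Lambda$ is homogeneous self-similar, and on $\Lambda_0^3$ the product IFS is built from the homotheties $(x,y,z)\mapsto\rho(x,y,z)+(s_a,s_b,s_c)$; hence $\Lambda_0^3$ is an HSS set in $\R^3$. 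Taking $a_i\approx\lambda_i^{s}L$ with $s=\dim_H\Lambda$ and $L$ large, the similarity dimension $\log\binom{L}{a_1,\dots,a_q}\big/(-\log\rho)$ of $\Lambda_0$ tends to $\dim_H\Lambda>1/3$, so we may arrange $\dim_H\Lambda_0>1/3$; replacing $\Lambda_0$ by one of its cylinders (which preserves homogeneity and dimension) we may also assume $\Lambda_0\subset(c,d)$ with $0<c<d$. This is the step I expect to require the most care: one must control the \emph{Hausdorff} dimension of the subsystem, handling possible overlaps so that it genuinely exceeds $1/3$, and keep $\Lambda_0$ bounded away from the origin. Since $\Lambda_0$ has at least two points, $\Lambda_0^3$ contains four affinely independent points and so is not contained in a plane, while $\dim_H\Lambda_0^3\ge 3\dim_H\Lambda_0>1$ by the product inequality.

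It remains to verify the three hypotheses of Theorem~\ref{tmain} for $g(\underline{x})=xyz$ on the open cone $V=(0,\infty)^3\supseteq\Lambda_0^3$. Here $\nabla_{\underline{x}}g=(yz,xz,xy)$ is nonzero on $\Lambda_0^3$ because all coordinates are bounded away from $0$, which gives \eqref{tcond1}. Since $\nabla_{t\cdot\underline{x}}g=t^2\,\nabla_{\underline{x}}g$, the gradients at $\underline{x}$ and $t\cdot\underline{x}$ are parallel for every $t$, so \eqref{tcond2} holds automatically; this is precisely the degree-two homogeneity of $g$, and it makes $h_g(\underline{x})=P_3(yz,xz,xy)$ well-defined on directions. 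Finally, in logarithmic coordinates $u=\log x,\ v=\log y,\ w=\log z$ on the positive octant the map $(x,y,z)\mapsto(yz,xz,xy)$ becomes the linear map $(u,v,w)\mapsto(v+w,u+w,u+v)$, whose matrix has determinant $2$; hence $(x,y,z)\mapsto(yz,xz,xy)$ is a degree-two homogeneous diffeomorphism of $(0,\infty)^3$ and descends to a diffeomorphism on directions, which is bi-Lipschitz on the compact set $P_3(\Lambda_0^3)$, giving \eqref{tcond3}. Theorem~\ref{tmain} then yields $\dim_H g(\Lambda_0^3)=1$, that is $\dim_H\Lambda_0\cdot\Lambda_0\cdot\Lambda_0=1$; since $\Lambda_0\cdot\Lambda_0\cdot\Lambda_0\subseteq\Lambda\cdot\Lambda\cdot\Lambda\subseteq\R$, the theorem follows.
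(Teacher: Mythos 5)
Your proposal is correct and follows essentially the same route as the paper: pass to a homogeneous self-similar subsystem with SSC of dimension still exceeding $1/3$ (your equal-letter-count construction is precisely the content of the cited Peres--Shmerkin result, \cite[Proposition~6]{PS}, which the paper invokes to handle the overlap issue you flag), take a cylinder bounded away from $0$, cube it to get an HSS set in $\R^3$ of dimension greater than $1$, and apply Theorem~\ref{tmain} to $g(x,y,z)=xyz$. The only real divergence is local: for condition~\eqref{tcond3} you verify that $h_g$ is bi-Lipschitz via the logarithmic change of variables making $\x\mapsto\nabla_{\x}g$ linear on the positive octant, which is a slicker argument than the paper's computation with the Hessian determinant and the transversality of normal vectors, and both are valid.
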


\section{Preliminaries and non-linear projections in $\R^2$} 

\indent

This section is devoted to enumerate our tools to prove Theorem~\ref{tmain}. The results on the projections in $\R^2$ were previously studied by several authors, e.g. Hochman \cite{H1}, Hochman and Shemrkin \cite{HS}, Bond, \L aba and Zahl \cite{BLZ} etc. For the convenience of the reader, we state here these theorems and give short proofs.

First, we introduce some notations. Let $\Phi$ be an IFS on $\R^d$ with contracting similitudes in the form \eqref{edefIFS}. Denote the attractor of $\Phi$ by $\Lambda$. Let us denote the set of symbols by $\mathcal{S}=\left\{1,\dots,q\right\}$ and the symbolic space by $\Sigma=\mathcal{S}^{\mathbb{N}}$. Denote $\sigma$ the left-shift operator on $\Sigma$. Let us define the natural projection $\rho$ from $\Sigma$ to $\Lambda$ in the usual way, i.e. for any $\ii=(i_0,i_1,\dots)\in\Sigma$
$$
\rho(\ii)=\lim_{n\rightarrow\infty}f_{i_0}\circ f_{i_1}\circ\cdots\circ f_{i_n}(\underline{0}),
$$
where $\underline{0}=(0,\dots,0)\in\R^d$. It is easy to see that $\rho(\ii)=f_{i_0}(\rho(\sigma\ii))$.

Let $\underline{p}=(p_1,\dots,p_q)$ be a probability vector with strictly positive elements. Denote the Bernoulli measure on $\Sigma$ by $\nu=\underline{p}^{\mathbb{N}}$, then $\nu$ is left-shift invariant and ergodic. Then the measure $\mu=\rho_*\nu=\nu\circ\rho^{-1}$ is the unique self-similar measure with $\spt\mu=\Lambda$ and $\mu=\sum_{i=1}^qp_i(f_i)_*\mu$.

Let us denote the finite length words of symbols $\mathcal{S}$ by $\Sigma^*=\bigcup_{n=0}^{\infty}\mathcal{S}^n$. For an $\overline{\imath}=(i_0,\dots,i_{n-1})\in\Sigma^*$, denote $|\il|$ the length of $\il$ and for any $\il,\jl\in\Sigma^*$, denote the juxtaposition $\il\jl$ the finite length word $(i_0,\dots,i_{|\il|-1},j_0,\dots,j_{|\jl|-1})$.

For the composition of functions $f_{i_0}\circ\cdots\circ f_{i_{n-1}}$, we write $f_{\il}$, where $\il=(i_0,\dots,i_{n-1})$. We denote the fixed point of a function $f_{\il}$ by $\mathrm{Fix}(f_{\il})$. Denote $[\il]$ the cylinder set formed by $\il$,
$$[\il]:=\left\{\jj=(j_0,j_1,\dots)\in\Sigma:i_0=j_0,\dots,i_{|\il|-1}=j_{|\il|-1}\right\}.$$
We denote the projection of a cylinder set by $\Lambda_{\il}=\rho([\il])=f_{\il}(\Lambda)$, and we call it as a cylinder set of $\Lambda$. We note that if $\mu$ is a HSS measure (or $\Lambda$ is a HSS set) with IFS~$\Phi=\left\{f_i(\x)=\lambda_i\x+\underline{t}_i\right\}_{i=1}^q$ in $\R^d$ with SSC then for any $\il\in\Sigma^*$ the measure $\mu_{\il}:=\frac{\left.\mu\right|_{\Lambda_{\il}}}{\mu(\Lambda_{\il})}$ (or respectively $\Lambda_{\il}$) is also a self-similar measure (or self-similar set) with IFS~$\Phi_{\il}:=\left\{\lambda_i\x+f_{\il}(t_i)\right\}_{i=1}^q$. On the other hand, for any $\pi\in\Pi_{d,k}$ the measure $\pi\mu=\mu\circ\pi^{-1}$ is HSS measure (or respectively $\pi\Lambda$ is a HSS set), as well, with IFS~$\pi\Phi:=\left\{\lambda x+\pi(\underline{t}_i)\right\}_{i=1}^q$. We denote the $n$th iteration of the IFS by $\Phi^n=\left\{f_{\il}\right\}_{\il\in\mathcal{S}^n}$.

Our first approach of the study of homothetic self-similar sets is to find proper approximating subsystem.

\begin{prop}\label{psubsystem}
	Let $\Lambda$ be an HSS set in $\R^d$ with IFS $\Phi=\left\{f_i(\x)=\lambda_i\x+\underline{t}_i\right\}_{i=1}^q$. For every $\varepsilon>0$, there exists an IFS $\Phi'$ of the form $\left\{g_j(\x)=\lambda\underline{x}+\underline{t}_j'\right\}_{j=1}^{q'}$ with $\lambda\in(0,1)$ such that the attractor $\Lambda'$ of $\Phi'$ satisfies the SSC, $\Lambda'\subseteq\Lambda$ and $\dim_H\Lambda'>\dim_H\Lambda-\varepsilon$. Moreover, the functions of $\Phi'$ can be written as the composition of functions in $\Phi$.
	
	We call the attractor and self-similar measures of such a system $\Phi'$ as homogeneous homothetic self-similar set and measures (HHSS).
\end{prop}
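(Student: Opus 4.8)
The plan is to reduce to a homogeneous system in two stages: first pass to a subsystem satisfying the SSC with almost full dimension, and then homogenize it by selecting long compositions that share a common contraction ratio.

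\emph{Stage 1 (separation).} I would first use the known fact that a self-similar set can be approximated in dimension from inside by subsystems satisfying the SSC: for every $\varepsilon>0$ there is a finite antichain $\mathcal{I}\subseteq\Sigma^*$ such that the maps $\{f_{\il}\}_{\il\in\mathcal{I}}$ satisfy the SSC and the dimension $s_1$ of their attractor exceeds $\dim_H\Lambda-\varepsilon/2$. Since the SSC implies the open set condition, Moran's equation gives $\sum_{\il\in\mathcal{I}}\lambda_{\il}^{s_1}=1$, where $\lambda_{\il}$ denotes the contraction ratio of $f_{\il}$ (the product of the corresponding $\lambda_i$). This is the step I expect to be the main obstacle, since no separation is assumed on $\Phi$, so the overlaps of $\Lambda$ must be controlled; if it cannot simply be cited, one proves it by a Moran/box-counting construction, discarding at each scale the cylinders that cluster together and retaining a fixed proportion of well-separated ones.

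\emph{Stage 2 (homogenization).} With $\mathcal{I}$ and $s_1$ in hand, set $p_{\il}=\lambda_{\il}^{s_1}$, so that $(p_{\il})_{\il\in\mathcal{I}}$ is a probability vector. Fix a large integer $N$ and nonnegative integers $N_{\il}$ with $\sum_{\il}N_{\il}=N$ and $N_{\il}=\lfloor Np_{\il}\rfloor$ up to rounding, and let $W$ be the set of all words over $\mathcal{I}$ of length $N$ in which each $\il\in\mathcal{I}$ occurs exactly $N_{\il}$ times. Every $\overline{w}\in W$ is a composition of maps of $\Phi$ with the same contraction ratio $\lambda=\prod_{\il\in\mathcal{I}}\lambda_{\il}^{N_{\il}}$, so $\Phi'=\{f_{\overline{w}}\}_{\overline{w}\in W}$ is homogeneous. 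As the elements of $W$ are distinct words of the common length $N$ over the SSC-system $\mathcal{I}$, their cylinders are pairwise disjoint, whence $\Phi'$ satisfies the SSC and its attractor $\Lambda'$ satisfies $\Lambda'\subseteq\Lambda$; each map of $\Phi'$ is a composition of maps of $\Phi$ by construction.

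Finally I would estimate the dimension of $\Lambda'$. Being homogeneous with the SSC, $\dim_H\Lambda'=\log|W|/\log(1/\lambda)$, where $|W|=N!/\prod_{\il}N_{\il}!$. Writing $\chi=-\sum_{\il}p_{\il}\log\lambda_{\il}$, Stirling's formula gives $\log|W|=N\bigl(-\sum_{\il}p_{\il}\log p_{\il}\bigr)+o(N)$ and $\log(1/\lambda)=N\chi+o(N)$; since $-\sum_{\il}p_{\il}\log p_{\il}=s_1\chi$, the ratio tends to $s_1$ as $N\to\infty$. Choosing $N$ large enough therefore yields $\dim_H\Lambda'>s_1-\varepsilon/2>\dim_H\Lambda-\varepsilon$, which completes the argument.
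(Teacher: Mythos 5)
Your proposal is correct and takes essentially the same route as the paper, which omits the proof and refers to Peres--Shmerkin \cite[Proposition~6]{PS}: their argument consists of exactly your two ingredients, namely selecting a well-separated (SSC) subsystem of cylinders at a single sufficiently small scale $\delta$ (justified because $\dim_H\Lambda$ never exceeds the lower box dimension, so one has at least $\delta^{-(\dim_H\Lambda-\varepsilon)}$ many $\delta$-separated cylinders before pruning), and then equalizing the contraction ratios via words of a fixed type, whose common ratio and multinomial/Stirling count yield dimension arbitrarily close to $s_1$. The only points worth flagging are that your Stage~1 sketch should run at one fixed small scale rather than ``at each scale'', and that the hedged ``known fact'' is indeed citable -- it is contained in the Peres--Shmerkin proof itself, with related versions in \cite[Lemma~3.4]{O} and in \cite{Fa}.
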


The proof is analogous to the proof of Peres and Shmerkin \cite[Proposition~6]{PS}, therefore we omit it.

Let us denote the Hausdorff dimension of a measure $\mu$ by $\dim_H\mu$. That is,
$$
\dim_H\mu=\inf\left\{\dim_HA:\mu(A)>0\right\}.
$$
Let us define the upper and lower local dimension of a measure $\mu$ at a point $\x$ in the usual way by
$$
\underline{d}_{\mu}(\x)=\liminf_{r\rightarrow0+}\frac{\log\mu(B_r(\x))}{\log r}\text{ and }\overline{d}_{\mu}(\x)=\limsup_{r\rightarrow0+}\frac{\log\mu(B_r(\x))}{\log r},
$$
where $B_r(\x)$ is the ball with radius $r$ centered at $\x$. By \cite[Theorem~1.2]{FLR},
\begin{equation}\label{edimloc}
\dim_H\mu=\mu-\essinf_{\x}\underline{d}_{\mu}(\x).
\end{equation}
We say that the measure $\mu$ is {\em exact dimensional} if $\underline{d}_{\mu}(\x)=\overline{d}_{\mu}(\x)$ for $\mu$-a.e. $\x$. By \cite[Corollary~2.1]{FLR}, if $\mu$ is exact dimensional then
$$
\dim_H\mu=\inf\left\{\dim_HA:\mu(A)=1\right\}.
$$

\begin{lemma}\label{lac}
	Let $\mu$ and $\nu$ be Borel probability measures such that $\mu\ll\nu$ (that is, $\mu$ is absolutely continuous with respect to $\nu$) and $\nu$ is exact dimensional. Then $\dim_H\mu=\dim_H\nu$.
\end{lemma}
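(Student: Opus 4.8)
The plan is to establish the two inequalities $\dim_H\mu\ge\dim_H\nu$ and $\dim_H\mu\le\dim_H\nu$ separately, working throughout with the characterisation $\dim_H\mu=\inf\{\dim_HA:\mu(A)>0\}$ taken from the definition, together with the corollary of \cite{FLR} for exact dimensional measures. The point is that one inequality is essentially free from absolute continuity, while the other is exactly where the exact-dimensionality hypothesis must be spent.

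For the lower bound I would observe that absolute continuity makes the family of sets relevant to $\mu$ a subfamily of those relevant to $\nu$. Indeed, if $\mu(A)>0$ then necessarily $\nu(A)>0$, since $\nu(A)=0$ forces $\mu(A)=0$ by $\mu\ll\nu$. Hence $\{A:\mu(A)>0\}\subseteq\{A:\nu(A)>0\}$, and since the infimum of $\dim_HA$ over a smaller family is at least that over a larger one, we get $\dim_H\mu\ge\dim_H\nu$ directly from the definition of the dimension of a measure.

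The content lies in the reverse inequality, which is where exact dimensionality of $\nu$ enters. Since $\nu$ is exact dimensional, \cite[Corollary~2.1]{FLR} gives $\dim_H\nu=\inf\{\dim_HA:\nu(A)=1\}$. I would fix $\varepsilon>0$, choose a Borel set $A$ with $\nu(A)=1$ and $\dim_HA<\dim_H\nu+\varepsilon$, and note that $\nu(A^c)=0$ forces $\mu(A^c)=0$ by $\mu\ll\nu$, so $\mu(A)=1>0$. Thus $A$ lies in the family $\{B:\mu(B)>0\}$, whence $\dim_H\mu\le\dim_HA<\dim_H\nu+\varepsilon$; letting $\varepsilon\to0$ yields $\dim_H\mu\le\dim_H\nu$, and combined with the lower bound this proves the lemma.

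The main obstacle, and the only real subtlety, is precisely this reverse inequality's reliance on exact dimensionality: without that hypothesis the statement can fail, since a measure $\mu\ll\nu$ may concentrate on a piece where the local dimension exceeds the $\nu$-essential infimum defining $\dim_H\nu$ through \eqref{edimloc}. The hypothesis is exactly what forces $\inf\{\dim_HA:\nu(A)=1\}$ (a packing-type quantity in general) to coincide with $\dim_H\nu$, and it is this coincidence, supplied by \cite[Corollary~2.1]{FLR}, on which the argument hinges. As an alternative I would keep in reserve a direct comparison of local dimensions: the Besicovitch differentiation theorem gives $\mu(B_r(\x))/\nu(B_r(\x))\to\frac{d\mu}{d\nu}(\x)\in(0,\infty)$ for $\mu$-a.e.\ $\x$, so that $\underline{d}_\mu(\x)=\underline{d}_\nu(\x)$ $\mu$-a.e., after which \eqref{edimloc} together with exact dimensionality closes the argument; but the set-theoretic route above is cleaner and avoids differentiation theory entirely.
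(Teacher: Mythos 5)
Your proposal is correct and follows essentially the same argument as the paper: absolute continuity gives $\dim_H\mu\geq\dim_H\nu$ straight from the definition $\dim_H\mu=\inf\left\{\dim_HA:\mu(A)>0\right\}$, and the reverse inequality comes from $\dim_H\nu=\inf\left\{\dim_HA:\nu(A)=1\right\}$ (exact dimensionality via \cite[Corollary~2.1]{FLR}) combined with the implication $\nu(A^c)=0\Rightarrow\mu(A^c)=0$. Your $\varepsilon$-extraction merely spells out the paper's one-line comparison of infima, so the Besicovitch differentiation route you keep in reserve is not needed.
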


\begin{proof}
	Since $\mu\ll\nu$, for any measurable set $A$, if $\mu(A)>0$ then $\nu(A)>0$. Thus, $\dim_H\mu\geq\dim_H\nu$. On the other hand, since $\nu$ is exact dimensional
	\begin{multline*}
	\dim_H\nu=\inf\left\{\dim_HA:\nu(A)=1\right\}=\inf\left\{\dim_HA:\nu(A^c)=0\right\}\geq\\
	\inf\left\{\dim_HA:\mu(A^c)=0\right\}=\inf\left\{\dim_HA:\mu(A)=1\right\}\geq\dim_H\mu,
	\end{multline*}
	where $A^c$ denotes the complement of $A$.
\end{proof}

Our second approach is to approximate the non-linear projections of HSS measures with SSC by orthogonal projections. Let $g:\R^d\mapsto\R$ be a $C^1$ function. We denote the projection of a Borel measure $\mu$ on $\R^d$ by $g_*\mu=\mu\circ g^{-1}$. Let us denote the gradient of $g$ at a point $\x=(x_1,\dots,x_d)$ by $\nabla_{\x}g$, i.e.
$$
\nabla_{\x}g=\left(\begin{array}{c}
g'_{x_1}(\underline{x}) \\
\vdots   \\
g'_{x_d}(\underline{x})
\end{array}\right).
$$
Denote $\pi_{g,\x}\in\Pi_{d,1}$ the orthogonal projection from $\R^d$ to the subspace spanned by $\nabla_{\x}g$, that is, $\pi_{g,\x}(\underline{y})~=~\frac{<\nabla_{\x}g,\underline{y}>}{\|\nabla_{\x}g\|}$, where $<.,.>$ denotes the standard scalar product on $\R^d$ and $\|.\|$ denotes the induced norm. The next theorem is a consequence of the results of Hochman \cite{H1}.

\begin{theorem}\label{tHochman}
	Let $\mu$ be an HSS measure with SSC in $\R^d$ and let $g:\R^d\mapsto\R$ be a $C^1$ function with $\|\nabla_{\underline{x}}g\|\neq0$ for every $\underline{x}\in\mathrm{spt}\mu$. Then
	\begin{equation*} 
	\dim_Hg_*\mu\geq\mu-\essinf_{\underline{x}}\dim_H\pi_{g,\x}\mu.
	\end{equation*}
\end{theorem}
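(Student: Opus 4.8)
The plan is to deduce the inequality from the local entropy average method of Hochman \cite{H1} and Hochman--Shmerkin \cite{HS}, with the nonlinearity of $g$ removed by a first-order linearisation along the cylinder structure of $\mu$. Write $s=\mu\text{-}\essinf_{\x}\dim_H\pi_{g,\x}\mu$. By \eqref{edimloc} it suffices to control the lower local dimension of $g_*\mu$, and the standard dictionary between local dimension and dyadic Shannon entropy $H(\cdot,\cdot)$ lets me work with entropies of $g_*\mu$ at small scales. The essential structural input is that, since $\mu$ is HSS with SSC, magnifying the generation-$n$ cylinder $\Lambda_{\il}$ back to unit size by $f_{\il}^{-1}$ returns $\mu$ itself --- homotheticity is used here, as no rotation is introduced --- so $\mu$ carries the self-similar tree on which the method runs. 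A point worth isolating at the outset: the global contributions to $g_*\mu\big(B_r(g\x)\big)$, i.e. distant pieces of $\Lambda$ on which $g$ takes values near $g(\x)$, are \emph{not} an obstruction, because they correspond to distinct cylinders being glued in the target, and by concavity of entropy such gluing only increases $H(g_*\mu,\cdot)$. Hence a lower bound obtained cylinder-by-cylinder on the source side is legitimate.

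First I would linearise on a single block of scales. On a generation-$m$ cylinder $\Lambda_{\il}=\lambda_{\il}\Lambda+\underline{c}$, the $C^1$ map $g$ differs from its first-order Taylor polynomial $\y\mapsto g(\x)+\langle\nabla_{\x}g,\y-\x\rangle$ by at most $\delta(m)\,\mathrm{diam}\,\Lambda_{\il}$, where $\delta(m)\to0$ as $m\to\infty$ by uniform continuity of $\nabla g$ on the compact $\spt\mu$. Writing $\langle\nabla_{\x}g,\y-\x\rangle=\|\nabla_{\x}g\|\,\big(\pi_{g,\x}\y-\pi_{g,\x}\x\big)$, this says that on $\Lambda_{\il}$ the map $g$ agrees, up to translation and dilation by $\|\nabla_{\x}g\|\in(0,\infty)$, with the orthogonal projection $\pi_{g,\x}$, and the agreement is faithful down to scales $r$ as small as $\delta(m)\,\mathrm{diam}\,\Lambda_{\il}$ --- a range of about $\log(1/\delta(m))$ generations below $\Lambda_{\il}$. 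Consequently, over this block the pushforward $g_*\!\left(\mu|_{\Lambda_{\il}}\right)$, rescaled, agrees with $\pi_{g,\x}\mu$ to within $o(1)$, so it accrues entropy at the rate $\dim_H\pi_{g,\x}\mu$ per unit of logarithmic scale; this is precisely where $\|\nabla_{\x}g\|\ne0$ enters, keeping the dilation nondegenerate so that $g$ acts as a genuine projection.

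Next I would iterate down the tree. Because magnifying a cylinder returns $\mu$, the linearisation can be repeated at every block: having descended through one block of $\approx\log(1/\delta(m))$ generations, one zooms into a generation-$m$ subcylinder, relinearises with the gradient at the new base point, and gains another block of entropy at the rate of the corresponding projection dimension. Packaging this iteration is exactly the content of Hochman's local entropy average theorem \cite{H1}, which delivers $\dim_Hg_*\mu$ as the long-run Ces\`aro average over blocks of these per-block rates; letting $m\to\infty$ so that $\delta(m)\to0$ removes the linearisation error, and the average converges to $\int \dim_H\pi_{g,\x}\mu\,d\mu(\x)$ by the exact dimensionality of the self-similar measures $\pi_{g,\x}\mu$. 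Therefore
\[
\dim_Hg_*\mu\ \ge\ \int \dim_H\pi_{g,\x}\mu\,d\mu(\x)\ \ge\ \mu\text{-}\essinf_{\x}\dim_H\pi_{g,\x}\mu\ =\ s .
\]
I expect the main obstacle to be the compatibility of the two limits in the second and third steps: the linearisation is trustworthy only over a finite block of scales whose length grows with $m$, while $\dim_H\pi_{g,\x}\mu$ is an asymptotic quantity, so one must interleave ``zoom into a cylinder'' and ``run the projection to a fine scale'' without the accumulated linearisation errors overwhelming the entropy gained --- which is exactly the estimate the local entropy average theorem is designed to provide, and where the non-vanishing of the gradient is indispensable.
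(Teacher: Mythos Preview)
Your proposal is correct in spirit and rests on the same machinery from \cite{H1}, but the route differs from the paper's. The paper does not rebuild the linearise-and-average argument; it invokes the ready-made black box \cite[Theorem~1.23]{H1} for $C^1$ images of uniformly scaling measures, which directly yields
\[
\dim_Hg_*\mu\ \ge\ \mu\text{-}\essinf_{\x}\,E_P(\pi_{g,\x}),
\]
where $P$ is the ergodic fractal distribution generated by $\mu$ and $E_P(\pi)=\int\dim_H\pi\nu\,dP(\nu)$. The remaining work in the paper is the identification $E_P(\pi)=\dim_H\pi\mu$, obtained from \cite[Proposition~1.36]{H1} (for $P$-a.e.\ $\nu$ one has $\mu\ll(T_B)_*\nu$ for some ball $B$), exact dimensionality of $\pi\nu$ \cite[Theorem~1.22]{H1}, and Lemma~\ref{lac}. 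Your sketch replaces this citation with an explicit description of the mechanism: Taylor-linearise $g$ on a cylinder, recognise the linear part as $\pi_{g,\x}$, iterate along the self-similar tree, and average entropies. That is precisely what underlies \cite[Theorem~1.23]{H1}, so nothing is wrong, and your version has the merit of exposing where each hypothesis enters.

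The one place your argument is thinner than the paper's is the sentence ``the average converges to $\int\dim_H\pi_{g,\x}\mu\,d\mu(\x)$ by the exact dimensionality of the self-similar measures $\pi_{g,\x}\mu$''. This is exactly the step the paper isolates as $E_P(\pi)=\dim_H\pi\mu$, and it needs more than exact dimensionality of $\pi_{g,\x}\mu$: one must know that the measures appearing in the scenery (the $P$-typical $\nu$) are, after rescaling, absolutely continuous with respect to $\mu$ itself, so that their projections have the \emph{same} dimension as $\pi\mu$. This is where SSC is genuinely used (via \cite[Example~4.3, Proposition~1.36]{H1}); your remark that ``magnifying a cylinder returns $\mu$'' is the right intuition, but the formal bridge from per-block entropy rates to $\dim_H\pi_{g,\x}\mu$ goes through this absolute-continuity fact rather than through exact dimensionality alone. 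With that point supplied, your argument and the paper's coincide.
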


\begin{proof}
	Let $\mu$ be an HSS measure with SSC in $\R^d$. Then by \cite[Example~4.3]{H1} the measure $\mu$ is a \textit{homogeneous uniformly scaling measure}, see \cite[Definition~1.5(3) and Defintion~1.35]{H1}. Let $P$ be the ergodic fractal distribution generated by $\mu$, see \cite[Definition~1.2, Definition~1.5(1) and Proposition~1.36]{H1}. For a $\pi\in\Pi_{d,1}$, let $$E_P(\pi)=\int\dim_H\pi\nu dP(\nu).$$
	Applying \cite[Theorem~1.23]{H1} and \cite[Proposition~1.36]{H1} we have for any $g:\R^d\mapsto\R$ $C^1$ function with $\|\nabla_{\underline{x}}g\|\neq0$
	$$
	\dim_Hg_*\mu\geq\mu-\essinf_{\underline{x}}E_P(\pi_{g,\x}).
	$$
	By \cite[Proposition~1.36]{H1} for $P$-a.e $\nu$ measure there exists a ball $B$ that $\mu\ll (T_B)_*\nu$, where $T_{B_r(\x)}(\underline{y})=\frac{\underline{y}-\x}{r}$. Hence, $\pi\mu\ll \pi(T_B)_*\nu$ for every $\pi\in\Pi_{d,1}$ and $P$-a.e. $\nu$. On the other hand, by \cite[Theorem~1.22]{H1} the measure $\pi\nu$ is exact dimensional for $P$-a.e. $\nu$. Since $T_B$ is a bi-Lipschitz map, by Lemma~\ref{lac}, $\dim_H\pi\mu=\dim_H\pi\nu$ for every $\pi\in\Pi_{d,1}$ and $P$-a.e. $\nu$, which implies that $E_P(\pi)=\dim_H\pi\mu$.
\end{proof}

As a consequence of Theorem~\ref{tHochman} and \cite[Theorem~1.8]{H2}, we state here a modified version of the proposition of Hochman, published in Bond, \L aba and Zahl \cite[Proposition~2.6]{BLZ}.

\begin{prop}\label{pBLZ}
	Let $\mu$ be a HHSS measure with SSC in $\R^2$ such that $\spt\mu$ is not contained in any line. Suppose that $g:\R^2\mapsto\R$ $C^2$ map such that $\|\nabla_{\x}g\|\neq0$ and
	$$\left\|\left(\begin{array}{c}
	g_{xx}''(\x)g_y'(\x)-g_{xy}''(\x)g_x'(\x) \\
	g_{xy}''(\x)g_y'(\x)-g_{yy}''(\x)g_x'(\x)
	\end{array}\right)\right\|\neq0$$
	for every $\x\in\spt\mu$. Then
	$$\dim_Hg_*\mu=\min\left\{1,\dim_H\mu\right\}.$$
\end{prop}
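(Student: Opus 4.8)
The plan is to combine the lower bound of Theorem~\ref{tHochman} with the zero-dimensionality of the exceptional set of linear projections supplied by \cite[Theorem~1.8]{H2}, and to use the curvature hypothesis to guarantee that $\mu$-almost every gradient direction is non-exceptional. The upper bound is immediate: since $g$ is $C^1$ it is locally Lipschitz on a neighbourhood of the compact set $\spt\mu$, so $\dim_Hg_*\mu\le\dim_H\mu$, and as $g_*\mu$ is supported on $\R$ we also have $\dim_Hg_*\mu\le1$; hence $\dim_Hg_*\mu\le\min\{1,\dim_H\mu\}$.

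For the lower bound, Theorem~\ref{tHochman} gives $\dim_Hg_*\mu\ge\mu\text{-}\essinf_{\x}\dim_H\pi_{g,\x}\mu$. Now $\pi_{g,\x}$ depends on $\x$ only through the direction of $\nabla_{\x}g$; writing $\Theta(\x)\in\Pi_{2,1}$ for the orthogonal projection onto $\mathrm{span}\,\nabla_{\x}g$, we have $\pi_{g,\x}\mu=\Theta(\x)\mu$. I would then invoke \cite[Theorem~1.8]{H2}: since $\mu$ is self-similar in $\R^2$ with SSC and $\spt\mu$ is not contained in a line, the exceptional set $E=\{\pi\in\Pi_{2,1}:\dim_H\pi\mu<\min\{1,\dim_H\mu\}\}$ has $\dim_HE=0$. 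It therefore suffices to prove that $\Theta_*\mu(E)=0$ (where $\Theta_*\mu$ denotes the push-forward of $\mu$ under the map $\Theta$, a measure on $\Pi_{2,1}$), since then $\dim_H\pi_{g,\x}\mu=\min\{1,\dim_H\mu\}$ for $\mu$-a.e.\ $\x$ and the essential infimum attains the desired value.

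The curvature hypothesis is precisely what makes $\Theta$ non-degenerate. A direct computation shows that, on $\{\|\nabla g\|\neq0\}$, the gradient of the direction angle of $\nabla g$ equals $\|\nabla g\|^{-2}$ times, up to sign, the vector appearing in the statement; thus the hypothesis is equivalent to $\Theta$ having non-vanishing derivative on $\spt\mu$. Hence $\Theta$ is a $C^1$ submersion with $|\nabla\Theta|\ge c>0$ on the compact set $\spt\mu$, so its level sets are $C^1$ curves meeting $\spt\mu$ transversally. I would then establish a non-concentration (tube) estimate: there are $C,t>0$ with $\mu$ of the $w$-neighbourhood of any level curve of $\Theta$ bounded by $Cw^{t}$. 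Granting this, $\Theta^{-1}(I)$ lies in a tube of width $\lesssim|I|$ about a level curve, so $\Theta_*\mu(I)\le C'|I|^{t}$ for every interval $I$; by the Frostman criterion $\dim_H\Theta_*\mu\ge t>0$, and since $\dim_HE=0$ this forces $\Theta_*\mu(E)=0$, which together with the upper bound yields the claim.

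The main obstacle is the tube estimate, and this is exactly where the hypotheses \emph{self-similar with SSC} and \emph{not contained in a line} are indispensable: a measure carried by a line would fail it, whereas a genuinely planar self-similar measure should spread transversally to every direction at a definite polynomial rate. I would obtain the uniform bound by covering each compact $C^1$ level curve by boundedly many nearly straight pieces, reducing to neighbourhoods of line segments, and then iterating the IFS to scale $w$ to bound the number of level-$w$ cylinders meeting a fixed thin tube, the key input being that the fixed points $\mathrm{Fix}(f_i)$ are not collinear. If instead \cite[Theorem~1.8]{H2} produces only a countable exceptional set $E$, it would suffice to show $\Theta_*\mu$ is non-atomic, i.e.\ that $\mu$ charges no single level curve of $\Theta$; this is the same non-concentration phenomenon in a weaker form.
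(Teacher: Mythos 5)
Your skeleton coincides with the paper's up to the last step: upper bound by local Lipschitzness, the lower bound $\dim_Hg_*\mu\geq\mu\text{-}\essinf_{\x}\dim_H\pi_{g,\x}\mu$ from Theorem~\ref{tHochman}, the observation that $\pi_{\theta}\mu$ is again a (homogeneous) self-similar family to which \cite[Theorem~1.8]{H2} applies, giving an exceptional set $E\subset\Pi_{2,1}$ of dimension $0$, and the reduction to showing that the push-forward of $\mu$ under the gradient-direction map $\Theta$ gives no mass to $E$. The genuine gap is your tube/Frostman estimate. Since $g$ is only $C^2$, the map $\Theta=\arctan(g'_x/g'_y)$ is only $C^1$, so its level sets are $C^1$ curves whose tangent fields are continuous but need not be H\"older. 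Your reduction of a curved $w$-tube to straight tube segments requires, at scale $\ell$, that the curve deviate from a line by at most $w$ over length $\ell$; with a mere modulus of continuity $\omega$ for the tangent this forces $\ell\omega(\ell)\lesssim w$, and the localized straight-slab bound then yields $\Theta_*\mu(I)\lesssim\left(|I|/\ell+\omega(\ell)\right)^c$, which tends to $0$ but is \emph{not} polynomial in $|I|$ unless $\omega$ is H\"older (i.e.\ unless $g\in C^{2,\alpha}$). So you do not get $\dim_H\Theta_*\mu\geq t>0$, only that $\Theta_*\mu$ is continuous (non-atomic); and your fallback is insufficient, because \cite[Theorem~1.8]{H2} produces a (packing-)dimension-zero but possibly uncountable exceptional set, which a non-atomic measure with no Frostman exponent can still charge.

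The paper closes this step without any curved-tube geometry: it applies Theorem~\ref{tHochman} a \emph{second} time, now to the function $f=\Theta$ itself. This is legitimate precisely because the curvature hypothesis in the statement is equivalent (as you correctly computed) to $\|\nabla_{\x}f\|\neq0$ on $\spt\mu$, and it yields $\dim_Hf_*\mu\geq\mu\text{-}\essinf_{\x}\dim_H\pi_{f,\x}\mu\geq\inf_{\pi\in\Pi_{2,1}}\dim_H\pi\mu$. The uniform positive lower bound $\inf_{\pi\in\Pi_{2,1}}\dim_H\pi\mu\geq c>0$ is Lemma~\ref{ltech1}, whose proof is essentially the non-collinear-fixed-points counting argument you sketched at the end of your proposal --- three fixed points forming a triangle force every projection to spread cylinders, giving $\pi\mu(B_r(x))\leq Cr^c$ uniformly in $\pi$ and $x$ --- but it is needed only for \emph{linear} projections, where the self-similar iteration works at every scale with uniform constants. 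In short: keep your framework and your straight-line non-concentration idea, but route the positivity of $\dim_H\Theta_*\mu$ through Hochman's theorem applied to $\Theta$ rather than through a hands-on estimate along the level curves of $\Theta$, which your regularity assumptions cannot support.
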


Before we prove the proposition, we need a technical lemma.

\begin{lemma}\label{ltech1}
	Let $\mu$ be a HHSS measure with SSC in $\R^2$ such that $\spt\mu$ is not contained in any line. Then there exists a constant $c>0$ that $\dim_H\pi\mu\geq c>0$ for every $\pi\in\Pi_{2,1}$.
\end{lemma}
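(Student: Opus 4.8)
The plan is to obtain the bound by controlling the $\mu$-measure of thin slabs in every direction simultaneously, exploiting that for a \emph{homothetic} IFS the projection $\pi$ intertwines each $f_i$ with a genuine similarity of $\R$ pointing in the \emph{same} direction as $\pi$ (no rotation is introduced). Write $e_\theta\in S^1$ for the direction of $\pi=\pi_\theta$, and for $y\in\R$, $\delta>0$ set $S_\theta(y,\delta)=\{\z\in\R^2:|\langle\z,e_\theta\rangle-y|\le\delta\}$ for the slab of half-width $\delta$. Since $\pi_\theta\mu(B_r(y))=\mu(S_\theta(y,r))$, it suffices to prove an estimate $\mu(S_\theta(y,r))\le Cr^c$ with constants $C,c>0$ \emph{independent of $\theta$ and $y$}; by the local-dimension formula \eqref{edimloc} this forces $\dim_H\pi_\theta\mu\ge c$ for every $\theta$. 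The point worth stressing is that a Marstrand-type almost-every-direction statement is useless here: we need uniformity over \emph{all} directions with a single constant, and this is exactly what I will extract from the assumption that $\spt\mu$ is not contained in a line.

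The geometric heart is the following slab-avoidance estimate. Since $\Lambda=\spt\mu$ is not contained in a line, fix three non-collinear points $\x_1,\x_2,\x_3\in\Lambda$ and let $w>0$ be the minimal width of their triangle, so that in \emph{every} direction $\theta$ the three numbers $\langle\x_k,e_\theta\rangle$ span an interval of length at least $w$. Write $D=\mathrm{diam}(\Lambda)$ and recall that each level-$m$ cylinder has diameter at most $\lambda^m D$. Choose $m$ once and for all so large that the three level-$m$ cylinders $\Lambda_{\il_1},\Lambda_{\il_2},\Lambda_{\il_3}$ containing $\x_1,\x_2,\x_3$ are distinct and $\lambda^m D<w/4$. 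Then for any $\theta$ and any slab $S_\theta(y,\delta)$ with $\delta\le\lambda^m D$, a cylinder $\Lambda_{\il_k}$ can meet the slab only if $\langle\x_k,e_\theta\rangle$ lies within $\delta+\lambda^m D$ of $y$; hence meeting all three would force all three projections into a window of length $2(\delta+\lambda^m D)\le 4\lambda^m D<w$, contradicting that they span at least $w$. So the slab misses at least one of $\Lambda_{\il_1},\Lambda_{\il_2},\Lambda_{\il_3}$, and therefore, letting $J(\theta,y,\delta)$ be the family of level-$m$ cylinders meeting the slab, we get $\sum_{\il\in J}p_{\il}\le 1-p_{\min}^m=:\rho<1$ uniformly in $\theta$ and $y$, where $p_{\min}=\min_j p_j>0$.

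With this in hand the estimate follows by renormalisation. Iterating self-similarity gives the measure identity $\mu=\sum_{|\il|=m}p_{\il}(f_{\il})_*\mu$; since $(f_{\il})_*\mu$ is carried by $\Lambda_{\il}$, while $f_{\il}^{-1}$ expands $S_\theta(y,\delta)$ to the \emph{parallel} slab $S_\theta(y_{\il},\delta/\lambda^m)$ (the direction is preserved precisely because the IFS is homothetic), I obtain for every $\delta\le\lambda^m D$
\[
\mu(S_\theta(y,\delta))=\sum_{\il\in J(\theta,y,\delta)}p_{\il}\,\mu\left(S_\theta(y_{\il},\delta/\lambda^m)\right)\le\rho\cdot\sup_{y'}\mu\left(S_\theta(y',\delta/\lambda^m)\right).
\]
Setting $\Psi(\delta)=\sup_{\theta,y}\mu(S_\theta(y,\delta))$, this reads $\Psi(\delta)\le\rho\,\Psi(\delta/\lambda^m)$ for $\delta\le\lambda^m D$, and inducting on $n$ in steps of $m$ (with base case $\Psi\le 1$ for $n<m$) yields $\Psi(\lambda^n D)\le\rho^{\lfloor n/m\rfloor}$. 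This gives $\mu(S_\theta(y,r))\le C r^{c}$ with $c=\frac{-\log\rho}{m(-\log\lambda)}>0$, as required. The main obstacle is entirely the middle step: converting the non-line hypothesis into the \emph{uniform} estimate $\sum_{\il\in J}p_{\il}\le\rho<1$ valid in all directions at once. Once that is secured, the fact that homothety keeps the projected dynamics in a fixed direction makes the renormalisation step completely routine.
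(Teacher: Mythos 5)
Your proof is correct and follows essentially the same route as the paper: the paper also fixes three non-collinear points (fixed points of three maps), takes the uniform directional width $\kappa$ of their triangle (your $w$), chooses a level $N$ with $\lambda^N$ small relative to $\kappa$, and shows that any ball of radius $\frac{\kappa}{4}\lambda^{nN}$ in the projection must miss at least one level-$N$ subcylinder of each surviving cylinder, yielding the mass decay $(1-p_{\min}^N)^n$ and the bound $\dim_H\pi\mu\geq\frac{\log(1-p_{\min}^N)}{N\log\lambda}>0$ via \eqref{edimloc}. Your renormalisation of the supremum $\Psi(\delta)$ is just a repackaging of the paper's induction on $z_n(x)$, so the two arguments coincide in substance and in the resulting constant.
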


\begin{proof}
	Let $\mu$ be a HHSS measure with SSC in $\R^2$ such that $\spt\mu$ is not contained in any line and let $\left\{f_i(\x)=\lambda\x+\underline{t}_i\right\}_{i=1}^q$ the corresponding IFS and $\underline{p}=(p_1,\dots,p_q)$ the corresponding probability vector.
	
	Since $\spt\mu$ is not contained in any line, there exist three fixed points of the functions, let say $f_1,f_2$ and $f_3$, form a triangle. Let us denote the sides of the triangle by $a$, $b$ and $c$. Let $\kappa=\inf_{\pi\in\Pi_{2,1}}\max\left\{|\pi a|,|\pi b|,|\pi c|\right\}>~0$ and let $N=\lceil\frac{\log\kappa/(4|\spt\mu|)}{\log\lambda}\rceil$, where $|.|$ denotes the diameter of a set. Let $x\in\pi\spt\mu$ be arbitrary, and let $$z_n(x):=\sum_{\substack{\il\in\mathcal{S}^{nN} \\ B_{\frac{\kappa}{4}\lambda^{nN}}(x)\cap\pi\Lambda_{\il}\neq\emptyset}}\nu([\il]).$$
		
	It is easy to see by the definition of $N$ and $\kappa$ that there exists an $\il\in\Sigma^*$ with $|\il|=N$ that $B_{\frac{\kappa}{4}\lambda^{N}}(x)\cap\pi\Lambda_{\il}=\emptyset$. Thus, $z_1(x)\leq(1-p_{\min}^N)$, where $p_{\min}=\min\left\{p_1,\dots,p_q\right\}$. On the other hand, for every $\il\in\Sigma^*$ with $|\il|=nN$ and $B_{\frac{\kappa}{4}\lambda^{nN}}(x)\cap\pi\Lambda_{\il}\neq\emptyset$ there exists a $\jl\in\Sigma$ with $|\jl|=N$ that $B_{\frac{\kappa}{4}\lambda^{(n+1)N}}(x)\cap\pi\Lambda_{\il\jl}=\emptyset$. Thus,
	\begin{equation}\label{einduct}
	\sum_{\substack{\jl\in\mathcal{S}^{N} \\ B_{\frac{\kappa}{4}\lambda^{(n+1)N}}(x)\cap\pi\Lambda_{\il\jl}\neq\emptyset}}\nu([\jl])\leq1-p_{\min}^N.
	\end{equation}
	
	Now we prove by induction that $z_n(x)\leq(1-p_{\min}^N)^n$. For $n=1$ it has already been showed. Assume that it holds for $n$. Then by \eqref{einduct}
	\begin{multline*}
		z_{n+1}(x)=\sum_{\substack{\il\in\mathcal{S}^{(n+1)N} \\ B_{\frac{\kappa}{4}\lambda^{(n+1)N}}(x)\cap\pi\Lambda_{\il}\neq\emptyset}}\nu([\il])=\sum_{\substack{\il\in\mathcal{S}^{nN} \\ B_{\frac{\kappa}{4}\lambda^{nN}}(x)\cap\pi\Lambda_{\il}\neq\emptyset}}\sum_{\substack{\jl\in\mathcal{S}^{N} \\ B_{\frac{\kappa}{4}\lambda^{(n+1)N}}(x)\cap\pi\Lambda_{\il\jl}\neq\emptyset}}\nu([\il\jl])=\\
		\sum_{\substack{\il\in\mathcal{S}^{nN} \\ B_{\frac{\kappa}{4}\lambda^{nN}}(x)\cap\pi\Lambda_{\il}\neq\emptyset}}\nu([\il])\sum_{\substack{\jl\in\mathcal{S}^{N} \\ B_{\frac{\kappa}{4}\lambda^{(n+1)N}}(x)\cap\pi\Lambda_{\il\jl}\neq\emptyset}}\nu([\jl])\leq(1-p_{\min}^N)z_n(x)\leq(1-p_{\min}^N)^{n+1}.
	\end{multline*}
	Hence, for any $x\in\spt\mu$
	$$\liminf_{r\rightarrow0+}\frac{\log\mu(B_r(x))}{\log r}=\liminf_{n\rightarrow\infty}\frac{\log\mu(B_{\frac{\kappa}{4}\lambda^{nN}}(x))}{nN\log\lambda}\geq\liminf_{n\rightarrow\infty}\frac{\log z_n(x)}{nN\log\lambda}=\frac{\log(1-p_{\min}^N)}{N\log\lambda}>0.$$
	Which implies by \eqref{edimloc} that $\dim_H\pi\mu\geq\frac{\log(1-p_{\min}^N)}{N\log\lambda}>0$ for every $\pi\in\Pi_{2,1}$.
\end{proof}

\begin{proof}[Proof of Proposition~\ref{pBLZ}]
	Let $\mu$ be a HHSS measure with SSC such that $\spt\mu$ is not contained in any line. Since $\dim_Hg_*\mu\leq\min\left\{1,\dim_H\mu\right\}$, it is enough to show the lower bound. By Theorem~\ref{tHochman} we have
	$$\dim_Hg_*\mu\geq\mu-\essinf_{\x}\dim_H\pi_{g,\x}\mu.$$ Thus, it is enough to show that
	\begin{equation}\label{eenough1}
	\dim_H\pi_{g,\x}\mu=\min\left\{1,\dim_H\mu\right\}\text{ for $\mu$-a.e. $\x$.}
	\end{equation}
	
	If $\mu$ is a HHSS measure with IFS $\left\{\lambda\x+\underline{t}_i\right\}_{i=1}^q$ then for any $\pi\in\Pi_{2,1}$ the measure $\pi\mu$ is HHSS measure, as well, with IFS $\left\{\lambda x+\pi(\underline{t}_i)\right\}_{i=1}^q$. By using the parametrization $\pi_{\theta}(\x)=<(\cos\theta,\sin\theta),\x>$ and \cite[Theorem~1.8]{H2}, it follows that
	$$\dim_P\left\{\theta\in[0,\pi):\dim_H\pi_{\theta}\mu<\min\left\{1,\dim_H\mu\right\}\right\}=0.$$
	Hence, to verify \eqref{eenough1} it is enough to show that
	\begin{equation*} 
	\dim_Hf_*\mu>0,
	\end{equation*}
	where $f(\x)=\arctan\left(\frac{g'_{x_1}(\x)}{g_{x_2}'(\x)}\right)$.  By our assumption $\|\nabla_{\x}f\|\neq0$ for every $\x\in\spt\mu$. By applying Theorem~\ref{tHochman} and Lemma~\ref{ltech1}, we get
	\[
	\dim_Hf_*\mu\geq\mu-\essinf_{\x}\dim_H\pi_{f,\x}\mu\geq\inf_{\pi\in\Pi_{2,1}}\dim_H\pi\mu\geq c>0.
	\]
\end{proof}

As a consequence of Proposition~\ref{pBLZ}, we state here the analogue of \cite[Proposition~2.5]{BLZ} but for measures, which plays important role for the further studies.

\begin{cor}\label{cradproj}
	If $\mu$ is a HHSS measure with SSC in $\R^2$ such that $\underline{0}\notin\spt\mu$ and $\spt\mu$ is not contained in any line then \begin{equation}\label{eeleg}
	\dim_H(P_2)_*\mu=\min\left\{1,\dim_H\mu\right\}.
	\end{equation}
\end{cor}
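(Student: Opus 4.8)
The plan is to derive Corollary~\ref{cradproj} from Proposition~\ref{pBLZ} by taking for the map $g$ a smooth branch of the angular coordinate on $S^1$, after localising $\spt\mu$ to pieces small enough that such a branch exists. The upper bound is immediate: since $\underline{0}\notin\spt\mu$ and $\spt\mu$ is compact, it lies in $\{\|\x\|\geq\delta\}$ for some $\delta>0$, where $P_2$ is Lipschitz, so $\dim_H(P_2)_*\mu\leq\dim_H\mu$, and trivially $\dim_H(P_2)_*\mu\leq1$ since the image is contained in $S^1$. It therefore suffices to prove the lower bound $\dim_H(P_2)_*\mu\geq\min\{1,\dim_H\mu\}$.

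I would first pass to cylinders, which is the device that repairs the fact that the angular coordinate is not globally $C^2$ on $\R^2\setminus\{\underline0\}$ (this branch-cut issue is the only genuine subtlety here). Write $\mu=\sum_{|\il|=n}\mu(\Lambda_{\il})\,\mu_{\il}$ for a fixed $n$, where $\mu_{\il}=\mu|_{\Lambda_{\il}}/\mu(\Lambda_{\il})$. As recorded in the preliminaries, each $\mu_{\il}$ is again a HHSS measure with SSC and the same contraction ratio; its support $\Lambda_{\il}=f_{\il}(\spt\mu)$ is a homothetic copy of $\spt\mu$, hence not contained in any line and not containing $\underline0$, and $\dim_H\mu_{\il}=\dim_H\mu$. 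Since $(P_2)_*\mu=\sum_{|\il|=n}\mu(\Lambda_{\il})(P_2)_*\mu_{\il}$ is a finite convex combination of measures, $\dim_H(P_2)_*\mu\geq\min_{|\il|=n}\dim_H(P_2)_*\mu_{\il}$. Now $\spt\mu$ has finite diameter and is bounded away from $\underline0$, so for $n$ large every cylinder $\Lambda_{\il}$ (of diameter $\leq\lambda^{n}|\spt\mu|$) is contained in an open sector $\{\alpha<\arg\x<\beta\}$ of angular width less than $\pi$, on which a single smooth branch $g$ of $\arg$ is defined on an open neighbourhood of $\Lambda_{\il}$; we extend $g$ to a global $C^2$ map. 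On this sector $P_2$ followed by the arc-length parametrisation of $S^1$ coincides with $g$, so $P_2$ and $g$ differ by an isometry onto the relevant arc, whence $\dim_H(P_2)_*\mu_{\il}=\dim_H g_*\mu_{\il}$.

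It then remains to verify the hypotheses of Proposition~\ref{pBLZ} for $g$ on $\Lambda_{\il}$, which is an elementary computation. Writing $\x=(x,y)$, one finds $\nabla_{\x}g=\|\x\|^{-2}(-y,x)$, so $\|\nabla_{\x}g\|=\|\x\|^{-1}\neq0$, and
$$\left(\begin{array}{c} g''_{xx}g'_y-g''_{xy}g'_x \\ g''_{xy}g'_y-g''_{yy}g'_x\end{array}\right)=\|\x\|^{-4}\left(\begin{array}{c} y \\ -x\end{array}\right),$$
whose norm equals $\|\x\|^{-3}\neq0$; both quantities are nonzero throughout $\Lambda_{\il}$ precisely because $\underline0\notin\Lambda_{\il}$. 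Proposition~\ref{pBLZ} therefore gives $\dim_H g_*\mu_{\il}=\min\{1,\dim_H\mu_{\il}\}=\min\{1,\dim_H\mu\}$, hence $\dim_H(P_2)_*\mu_{\il}=\min\{1,\dim_H\mu\}$ for every $\il$. Combining this with the reduction of the previous paragraph yields the lower bound, and together with the trivial upper bound this establishes \eqref{eeleg}. The main obstacle is not the estimate itself—that is absorbed into Proposition~\ref{pBLZ}—but the localisation to cylinders, which must be carried out so as to preserve the homogeneous self-similar structure (same ratio $\lambda$, SSC, support off any line and off $\underline0$) that Proposition~\ref{pBLZ} requires.
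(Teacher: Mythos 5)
Your proof is correct and follows essentially the same route as the paper: decompose $\mu$ into cylinder measures $\mu_{\il}$ (which remain HHSS with SSC, off $\underline{0}$ and off any line), localise so that the angular coordinate admits a smooth chart, and apply Proposition~\ref{pBLZ}. The only cosmetic difference is the choice of chart --- you use a branch of $\arg$ (and your computation $\|\nabla_{\x}g\|=\|\x\|^{-1}$, curvature vector of norm $\|\x\|^{-3}$, checks out), whereas the paper rotates the cylinder into the upper half-plane and applies Proposition~\ref{pBLZ} to $g:(x,y)\mapsto x/\sqrt{x^2+y^2}$ together with the bi-Lipschitz map $x\mapsto(x,\sqrt{1-x^2})$.
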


\begin{proof}
	Since $\mu$ can be written as a convex combination of self-similar measures restricted to cylinder sets, we have
	$$\dim_H(P_2)_*\mu=\min_{\il\in\mathcal{S}^n}\dim_H(P_2)_*\mu_{\il}$$
	for every $n\geq1$. Thus it is enough to show that for sufficiently large $n\geq1$ \eqref{eeleg} holds for any $\il\in\mathcal{S}^n$. By choosing $n$ sufficiently large and by applying a rotation transformation, without loss of generality we may assume that $\spt\mu_{\il}$ is contained in the upper half plane and it is separated away from the $x$-axis.
	
	Since the map $h:x\mapsto(x,\sqrt{1-x^2})$ is bi-Lipschitz for every $x\in(-1+\varepsilon,1+\varepsilon)$, it is enough to show that for the map $g:(x,y)\mapsto\frac{x}{\sqrt{x^2+y^2}}$, $\dim_Hg_*\mu_{\il}=\min\left\{1,\dim_H\mu_{\il}\right\}.$ Indeed, $g$ satisfies the assumptions of Proposition~\ref{pBLZ}.
\end{proof}

As another consequence of Proposition~\ref{pBLZ} we can state the following theorem for general self-similar sets in $\R^2$.

\begin{theorem}\label{tBLZ}
	Let $\Lambda$ be an arbitrary self-similar set in $\R^2$ not contained in any line. Suppose that $g:\R^2\mapsto\R$ is a $C^2$ map such that $\|\nabla_{\x}g\|\neq0$ and
	$$\left\|\left(\begin{array}{c}
	g_{xx}''(\x)g_y'(\x)-g_{xy}''(\x)g_x'(\x) \\
	g_{xy}''(\x)g_y'(\x)-g_{yy}''(\x)g_x'(\x)
	\end{array}\right)\right\|\neq0$$
	for every $\x\in\Lambda$. Then
	$$\dim_Hg(\Lambda)=\min\left\{1,\dim_H\Lambda\right\}.$$
\end{theorem}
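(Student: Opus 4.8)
The upper bound $\dim_H g(\Lambda)\le\min\{1,\dim_H\Lambda\}$ is immediate, since $g$ is $C^1$ (hence Lipschitz) on a neighbourhood of the compact set $\Lambda$ and maps into $\R$, and a Lipschitz map cannot increase Hausdorff dimension. All the content lies in the lower bound, and the plan is to reduce an arbitrary planar self-similar set to the homogeneous homothetic situation covered by Proposition~\ref{pBLZ}.

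To handle the orthogonal parts $O_i$, I would split into cases according to the subgroup $G\le O(2)$ obtained as the closure of the semigroup generated by $O_1,\dots,O_q$. A nonempty closed sub-semigroup of a compact group is a subgroup, and the only infinite closed subgroups of $O(2)$ contain $SO(2)$; hence either $G$ is finite, or the directions $\{\pi O_{\il}:\il\in\Sigma^*\}$ are already dense in $\Pi_{2,1}$ for any fixed $\pi$.

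In the dense case the minimality assumption \eqref{eminimality} holds, and since $\|\nabla_{\x}g\|\ne0$ on $\Lambda$ the map $g$ has no singular points near $\Lambda$; the theorem of Hochman and Shmerkin, together with Farkas' removal of the separation condition, then gives $\dim_H g(\Lambda)=\min\{1,\dim_H\Lambda\}$ outright (the curvature hypothesis is not even needed here). In the finite case I would first kill the rotations: as $O_{\il}$ equidistributes over the finite group $G$ when $|\il|\to\infty$, the sub-IFS $\tilde\Phi=\{f_{\il}:|\il|=n,\ O_{\il}=I\}$ keeps a definite fraction of the mass, its attractor $\tilde\Lambda\subseteq\Lambda$ is a homothetic self-similar set, and $\dim_H\tilde\Lambda\to\dim_H\Lambda$ as $n\to\infty$; choosing three of the retained words to have non-collinear fixed points keeps $\tilde\Lambda$ off every line. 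Fixing $\varepsilon>0$ and $n$ large, Proposition~\ref{psubsystem} applied to $\tilde\Lambda$ yields an HHSS set $\Lambda'\subseteq\Lambda$ with SSC, not contained in a line, and $\dim_H\Lambda'>\dim_H\Lambda-\varepsilon$. Letting $\mu'$ be the uniform self-similar measure on $\Lambda'$, for which $\dim_H\mu'=\dim_H\Lambda'$, the hypotheses on $g$ hold on $\spt\mu'\subseteq\Lambda$, so Proposition~\ref{pBLZ} gives
$$\dim_H g(\Lambda)\ge\dim_H g_*\mu'=\min\{1,\dim_H\mu'\}>\min\{1,\dim_H\Lambda\}-\varepsilon,$$
and $\varepsilon\to0$ finishes the lower bound.

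The main obstacle is the finite case, and precisely the claim that passing to the trivial-rotation words and then to a homogeneous SSC subsystem costs only $\varepsilon$ in Hausdorff dimension. This is a variational/large-deviations statement — the Peres–Shmerkin extraction underlying Proposition~\ref{psubsystem}, now coupled with the equidistribution of $O_{\il}$ over $G$ — and the delicate part is recovering the true Hausdorff dimension rather than the similarity dimension, so that possible overlaps in $\Phi$ are controlled. A lesser point to check is that the cited dense-case results indeed cover nonlinear $g$ in the absence of the separation condition.
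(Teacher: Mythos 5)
Your overall architecture matches the paper's (reduce to a homogeneous homothetic SSC subsystem and apply Proposition~\ref{pBLZ}, with a separate dense-rotations case handled by Hochman--Shmerkin), and your upper bound and dense case are fine in outline. But the finite-rotation case contains a genuine gap, at exactly the point you flag and then defer. The claim that the trivial-rotation subsystem $\tilde\Phi=\{f_{\il}:|\il|=n,\ O_{\il}=I\}$ satisfies $\dim_H\tilde\Lambda\to\dim_H\Lambda$ is not justified by ``keeps a definite fraction of the mass'': the union of the selected level-$n$ cylinders does carry a fixed proportion of the natural measure, but the attractor of $\tilde\Phi$ is the far smaller set $\bigcap_{k}\bigcup f_{\il_1\cdots\il_k}(\Lambda)$ over selected words, and the restricted measure is not supported on it. Without any separation condition the Hausdorff dimension of $\Lambda$ may be strictly below its similarity dimension, and the Bernoulli measure of the sub-IFS may itself lose dimension to overlaps, so no counting or equidistribution argument over the finite group $G$ yields the bound; what is needed is exactly a Farkas/Orponen-type extraction theorem, i.e.\ the step your sketch leaves open. (A smaller dependency: your dense case needs Farkas \cite{Fa} to cover \emph{nonlinear} $C^1$ images without separation, which you yourself mark as unchecked.)

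The paper closes this gap by commuting your two reductions. It first invokes \cite[Lemma~3.4]{O}, valid with no separation hypothesis, to produce for each $\varepsilon>0$ a self-similar $\Lambda'\subseteq\Lambda$ not contained in a line, with SSC and $\dim_H\Lambda'\geq\dim_H\Lambda-\varepsilon$; this quarantines all the overlap difficulty in one cited lemma. The case split is then made on the SSC system $\Phi'$ rather than on the original IFS: if some map of $\Phi'$ contains an irrational rotation, \cite[Corollary~1.7]{HS} applies directly to $\Lambda'$ (SSC already holds, so Farkas is not needed at all); otherwise every rotation in $\Phi'$ has finite order, and \cite[Lemma~4.2]{O} removes rotations and reflections at a cost of $\varepsilon$ --- legitimately, because under SSC Hausdorff dimension equals similarity dimension and the pigeonhole over the finite group controls the loss. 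Proposition~\ref{psubsystem} then homogenizes, and Proposition~\ref{pBLZ} applied to the natural measure on the resulting HHSS set finishes exactly as in your final display. So your plan is repaired not by a new large-deviations estimate but by performing the SSC extraction \emph{before} killing the rotations.
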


\begin{proof}
	Let $\Lambda$ be a self-similar set in $\R^2$ not contained in any line. 
	Applying \cite[Lemma~3.4]{O}, for every $\varepsilon>0$ there exists a self-similar set $\Lambda'\subseteq\Lambda$ not contained in any line such that $\dim_H\Lambda'\geq\dim_H\Lambda-\varepsilon$ and its the attractor of IFS $\Phi'$ satisfying SSC. If one of the functions of $\Phi'$ contains an irrational rotation then by \cite[Corollary~1.7]{HS}
	$$\dim_Hg(\Lambda)\geq\dim_Hg(\Lambda')=\min\left\{1,\dim_H\Lambda'\right\}\geq\min\left\{1,\dim_H\Lambda\right\}-\varepsilon.$$
	
	If none of the functions of $\Phi'$ contains irrational rotation then by \cite[Lemma~4.2]{O} there exists a self-similar set $\Lambda''\subseteq\Lambda$ such that
	$\dim_H\Lambda''\geq\dim_H\Lambda-2\varepsilon$ and the similitudes of generating IFS~$\Phi''$ of $\Lambda''$ do not contain any rotation or reflection, i.e. it is a HSS set with SSC. By Proposition~\ref{psubsystem}, there exists a HHSS set $\Lambda'''$ with SSC that $\Lambda'''\subseteq\Lambda$ and $\dim_H\Lambda'''\geq\dim_H\Lambda-3\varepsilon$.
	
	Let $\mu$ be the natural self-similar measure on $\Lambda'''$, that is, $\mu$ is the equidistributed self-similar measure on the cylinder sets. Hence, $\dim_H\mu=\dim_H\Lambda'''$. By Proposition~\ref{pBLZ},
	$$\dim_Hg(\Lambda)\geq\dim_Hg_*\mu=\min\left\{1,\dim_H\mu\right\}=\min\left\{1,\dim_H\Lambda'''\right\}\geq\min\left\{1,\dim_H\Lambda\right\}-3\varepsilon.$$
	Since $\varepsilon>0$ was arbitrary, the statement of the theorem is proven.
\end{proof}

As a corollary of Theorem~\ref{tBLZ}, one can prove a weaker version of Falconer's distance set conjecture in $\R^2$. This is just a little bit stronger than Orponen's result \cite[Theorem~1.2]{O}, since we only assume that $\dim_H\Lambda\geq1$ and we do not need that $\mathcal{H}^1(\Lambda)>0$.

\begin{cor}\label{cdist}
	If $\Lambda$ is a self-similar set in $\R^2$ with $\dim_H\Lambda\geq1$. Then
	$$\dim_HD(\Lambda)=1,$$
	where $D(\Lambda)$ denotes the distance set of $\Lambda$ defined in \eqref{edistset}.
\end{cor}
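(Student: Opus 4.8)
The plan is to establish the lower bound $\dim_H D(\Lambda)\ge 1$; the matching upper bound is immediate, since $D(\Lambda)\subseteq[0,\infty)\subseteq\R$ forces $\dim_H D(\Lambda)\le 1$. Fix any $\z\in\Lambda$ and consider the pinned distance set $D_{\z}(\Lambda)$ of \eqref{edistset2}; since $D_{\z}(\Lambda)\subseteq D(\Lambda)$ it suffices to produce one $\z$ with $\dim_H D_{\z}(\Lambda)\ge1$. Now $D_{\z}(\Lambda)=g(\Lambda)$ for the map $g(\underline{x})=\|\underline{x}-\z\|$, and this is the function I want to feed into Theorem~\ref{tBLZ}. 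Its only defect is the singularity at $\underline{x}=\z\in\Lambda$, which I remove by restricting to a cylinder of $\Lambda$ bounded away from $\z$. The degenerate collinear case is treated separately at the end.

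Suppose first that $\Lambda$ is \emph{not} contained in any line. Then $\mathrm{diam}(\Lambda)=d>0$, and by compactness there are $\underline{u},\underline{v}\in\Lambda$ with $\|\underline{u}-\underline{v}\|=d$, so at least one of them, say $\underline{w}$, satisfies $\|\z-\underline{w}\|\ge d/2$. Choosing $n$ so large that $(\max_i\lambda_i)^n d<d/4$ and taking any $\overline{\imath}\in\mathcal{S}^n$ with $\underline{w}\in\Lambda_{\overline{\imath}}$, every point of $\Lambda_{\overline{\imath}}$ lies within $\mathrm{diam}(\Lambda_{\overline{\imath}})<d/4$ of $\underline{w}$, hence $\mathrm{dist}(\z,\Lambda_{\overline{\imath}})>d/4>0$. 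The cylinder $\Lambda_{\overline{\imath}}=f_{\overline{\imath}}(\Lambda)$ is the image of $\Lambda$ under a similarity, so it is again a self-similar set, is not contained in any line, and satisfies $\dim_H\Lambda_{\overline{\imath}}=\dim_H\Lambda\ge1$. On the open set $\{\underline{x}:\|\underline{x}-\z\|>d/8\}\supseteq\Lambda_{\overline{\imath}}$ the map $g$ is real-analytic with $\|\nabla_{\underline{x}}g\|\equiv1\ne0$, and writing $\underline{x}=(x,y)$, $\z=(z_1,z_2)$ a direct computation gives
\[
\begin{pmatrix} g''_{xx}g'_y-g''_{xy}g'_x \\ g''_{xy}g'_y-g''_{yy}g'_x \end{pmatrix}=\frac{1}{\|\underline{x}-\z\|^{2}}\begin{pmatrix} y-z_2 \\ -(x-z_1) \end{pmatrix},
\]
whose norm equals $\|\underline{x}-\z\|^{-1}\ne0$ for every $\underline{x}\in\Lambda_{\overline{\imath}}$. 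Thus the hypotheses of Theorem~\ref{tBLZ} hold on $\Lambda_{\overline{\imath}}$ (extending $g$ to a global $C^2$ function away from a neighbourhood of $\z$ if one insists on a map defined on all of $\R^2$), and that theorem yields $\dim_H g(\Lambda_{\overline{\imath}})=\min\{1,\dim_H\Lambda_{\overline{\imath}}\}=1$. Since $g(\Lambda_{\overline{\imath}})\subseteq D_{\z}(\Lambda)\subseteq D(\Lambda)$, we conclude $\dim_H D(\Lambda)\ge1$.

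It remains to handle the case where $\Lambda$ lies on a line $\ell$. Identifying $\ell$ isometrically with $\R$ turns $\Lambda$ into a self-similar set $A\subseteq\R$; since $A\subseteq\R$ we have $\dim_H A\le1$, so $\dim_H A=\dim_H\Lambda=1$. Fix $a_0\in A$. Then $D(\Lambda)\supseteq\{|s-a_0|:s\in A\}$, and the translate $A-a_0$ has $\dim_H(A-a_0)=1$. Splitting $A-a_0=E_+\cup E_-$ according to sign, finite stability of Hausdorff dimension forces one of $E_\pm$, say $E$, to have $\dim_H E=1$; on the relevant half-line $x\mapsto|x|$ is an isometry, so $\dim_H\{|s-a_0|:s\in A\}\ge\dim_H|E|=1$, whence again $\dim_H D(\Lambda)\ge1$. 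Combining the two cases with the trivial upper bound proves $\dim_H D(\Lambda)=1$.

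The main obstacle is the second paragraph: the Euclidean distance function from a point of $\Lambda$ fails to be differentiable precisely at that point, so Theorem~\ref{tBLZ} cannot be applied to $g$ on all of $\Lambda$ directly. The device resolving this---passing to a cylinder $\Lambda_{\overline{\imath}}$ that is bounded away from the pin point while retaining full dimension and non-collinearity---is the crux, together with the short verification that the concentric circles which form the level sets of $g$ meet the curvature hypothesis of Theorem~\ref{tBLZ}. Everything else is bookkeeping.
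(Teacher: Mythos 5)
Your proof is correct and follows essentially the same route as the paper: pin a point of $\Lambda$, pass to a cylinder $\Lambda_{\il}$ separated from the pin (which preserves dimension and non-collinearity), apply Theorem~\ref{tBLZ} to $g(\x)=\|\x-\z\|$, and treat the collinear case separately. You merely make explicit what the paper leaves implicit --- the curvature computation for $g$ and the one-dimensional argument when $\Lambda$ lies on a line --- and both of these verifications are accurate.
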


\begin{proof}
	If $\Lambda$ is contained in a line then $\dim_HD(\Lambda)=\dim_H\Lambda$. So, we may assume that $\Lambda$ is not contained in any line. Let $\underline{a}$ be an arbitrary element of $\Lambda$ and let $\Lambda_{\il}$ be a cylinder set such that $\mathrm{dist}(\underline{a},\Lambda_{\il})>0$. Then $D_{\underline{a}}(\x)=\|\x-\underline{a}\|$ satisfies the conditions of Theorem~\ref{tBLZ} with self-similar set $\Lambda_{\il}$. Thus,
	$$\dim_HD(\Lambda)\geq\dim_HD_{\underline{a}}(\Lambda)\geq\dim_HD_{\underline{a}}(\Lambda_{\il})=\min\left\{1,\dim_H\Lambda_{\il}\right\}=\min\left\{1,\dim_H\Lambda\right\}=1.
	$$
\end{proof}

Another corollary of Theorem~\ref{tBLZ} is \eqref{eprod2}.

\begin{cor}\label{cprod2}
	If $\Lambda$ is a self-similar set in $\R$ then $\dim_H\Lambda\cdot\Lambda=\min\left\{2\dim_H\Lambda,1\right\}$.
\end{cor}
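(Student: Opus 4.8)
The plan is to realise (a large subset of) $\Lambda\cdot\Lambda$ as the non-linear image $g(\Lambda'\times\Lambda')$ of a planar self-similar set under the product map $g(x,y)=xy$, and then to invoke Theorem~\ref{tBLZ}. The upper bound is routine: $g$ is Lipschitz on the bounded set $\Lambda\times\Lambda$, and for a self-similar set the Hausdorff, packing and box dimensions coincide, so $\dim_H(\Lambda\cdot\Lambda)=\dim_Hg(\Lambda\times\Lambda)\leq\dim_H(\Lambda\times\Lambda)\leq\dim_H\Lambda+\dim_P\Lambda=2\dim_H\Lambda$; since $\Lambda\cdot\Lambda\subseteq\R$ we also have $\dim_H(\Lambda\cdot\Lambda)\leq1$. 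Thus it remains to prove the matching lower bound $\dim_H(\Lambda\cdot\Lambda)\geq\min\{1,2\dim_H\Lambda\}$, and we may assume $\dim_H\Lambda>0$, so that $\Lambda$ is not a single point.

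For the lower bound I would first pass to a cylinder $\Lambda_{\il}$ with $0\notin\Lambda_{\il}$; such a cylinder exists because $\Lambda$ is not a single point, it is again self-similar with $\dim_H\Lambda_{\il}=\dim_H\Lambda$, and $\Lambda_{\il}\cdot\Lambda_{\il}\subseteq\Lambda\cdot\Lambda$. Hence it suffices to bound $\dim_H(\Lambda_{\il}\cdot\Lambda_{\il})$ from below, and we may simply assume $0\notin\Lambda$. Next, for every $\varepsilon>0$ I would run the same reduction as in the proof of Theorem~\ref{tBLZ}: since there are no rotations on the line, the generating maps carry at most reflections, so Orponen's lemmas together with Proposition~\ref{psubsystem} produce a HHSS subset $\Lambda'\subseteq\Lambda$ with the SSC, a common contraction ratio $\lambda$, and $\dim_H\Lambda'\geq\dim_H\Lambda-\varepsilon$. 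Note that $0\notin\Lambda'$ and, once $\varepsilon<\dim_H\Lambda$, the set $\Lambda'$ has at least two points.

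The key point is that the homogeneity makes the self-product self-similar: the product IFS $\{(x,y)\mapsto(\lambda x+t_i',\lambda y+t_j')\}_{i,j}$ consists of homotheties of $\R^2$, so $\Lambda'\times\Lambda'$ is a homothetic self-similar set with the SSC, not contained in any line, and avoiding the origin; counting the $(q')^2$ maps of common ratio $\lambda$ gives $\dim_H(\Lambda'\times\Lambda')=2\dim_H\Lambda'$. Now $g(x,y)=xy$ is $C^2$ with $\nabla g=(y,x)$ and second-order vector $(-y,x)$, both of which vanish only at the origin; hence both hypotheses of Theorem~\ref{tBLZ} hold at every point of $\Lambda'\times\Lambda'$. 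Applying it yields $\dim_H(\Lambda'\cdot\Lambda')=\dim_Hg(\Lambda'\times\Lambda')=\min\{1,2\dim_H\Lambda'\}\geq\min\{1,2\dim_H\Lambda-2\varepsilon\}$, and since $\Lambda'\cdot\Lambda'\subseteq\Lambda\cdot\Lambda$ and $\varepsilon>0$ was arbitrary, the desired lower bound follows.

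The obstacle to watch is precisely this self-similarity issue: for a general self-similar $\Lambda$ with unequal ratios $\lambda_i$, the product $\Lambda\times\Lambda$ is only self-affine, since its generating maps have linear part $\mathrm{diag}(\lambda_i,\lambda_j)$, which is not a scalar multiple of an orthogonal matrix unless $\lambda_i=\lambda_j$; consequently Theorem~\ref{tBLZ} cannot be applied to $\Lambda\times\Lambda$ directly. Passing to a homogeneous subsystem is what repairs this, at the negligible cost of $\varepsilon$ in dimension. The remaining verifications, namely the nondegeneracy of $g$ away from the origin and the equality $\dim_H(\Lambda'\times\Lambda')=2\dim_H\Lambda'$, are then routine.
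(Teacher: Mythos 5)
Your proposal is correct and takes essentially the same route as the paper: pass to a cylinder avoiding $0$, extract a homogeneous subsystem with the SSC so that $\Lambda'\times\Lambda'$ becomes a planar self-similar set with SSC (the paper does this in a single step via \cite[Proposition~6]{PS}, which already handles reflections, rather than through Orponen's lemmas and Proposition~\ref{psubsystem}), and then apply Theorem~\ref{tBLZ} to $g(x,y)=xy$ together with $\dim_H(\Lambda'\times\Lambda')=2\dim_H\Lambda'$ and let $\varepsilon\to0$. The only cosmetic difference is that you spell out the upper bound, which the paper leaves implicit.
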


\begin{proof}
	Let $\Lambda$ be an arbitrary self-similar set on $\R$. Without loss of generality, we may assume that $\Lambda$ is not a singleton. Then there exists a cylinder set $\Lambda_{\il}$ of $\Lambda$ that every element in $\Lambda_{\il}$ is either strictly positive or strictly negative.
	
	By \cite[Proposition~6]{PS}, for every $\varepsilon>0$ there exists a self-similar set $\Lambda'\subseteq\Lambda_{\il}$ such that $\dim_H\Lambda'\geq\dim_H\Lambda-\varepsilon$ and its the attractor of IFS $\Phi$ satisfying SSC and has the form
	$$
	\Phi=\left\{f_i(x)=\lambda x+t_i\right\}_{i=1}^{q}.
	$$
	Then $\Lambda'\times\Lambda'$ is a self-similar set with SSC in $\R^2$ with IFS $$\Phi'=\left\{h_i(\underline{x})=\lambda\x+(t_i,t_j)\right\}_{i,j=1}^q.$$
	
	Let $g(x,y)=xy$. Then
	$$\|\nabla_{\x}g\|=\sqrt{y^2+x^2}\neq0\text{ and }\left\|\left(\begin{array}{c}
	g_{xx}''(\x)g_y'(\x)-g_{xy}''(\x)g_x'(\x) \\
	g_{xy}''(\x)g_y'(\x)-g_{yy}''(\x)g_x'(\x)
	\end{array}\right)\right\|=\sqrt{x^2+y^2}\neq0$$ for any $(x,y)\in\R^2\backslash\left\{(0,0)\right\}$. Thus by Theorem~\ref{tBLZ}
	$$\dim_H\Lambda\cdot\Lambda\geq\dim_H\Lambda'\cdot\Lambda'=\dim_Hg(\Lambda'\times\Lambda')=\min\left\{1,\dim_H\Lambda'\times\Lambda'\right\}\geq\min\left\{1,2\dim_H\Lambda\right\}-2\varepsilon,$$
	where we used that $\dim_H\Lambda'\times\Lambda'=2\dim_H\Lambda'$, see \cite[Corollary~7.4]{F}. Since $\varepsilon>0$ was arbitrary, the proof is complete.
\end{proof}

Similarly to the proof of Proposition~\ref{pBLZ}, to prove our main Theorem~\ref{tmain}, we need an upper bound for the exceptional directions for the orthogonal projections in $\Pi_{3,1}$. For a vector $\n\in S^{d-1}$ let $\pi_{\n}\in\Pi_{d,1}$ be the orthogonal projection to the subspace generated by $\n$, i.e. $\pi_{\n}(\x)=<\x,\n>$.

\begin{prop}\label{texcept}
	Let $\mu$ be a HHSS measure in $\R^3$ with SSC. Then \begin{equation}\label{eexcept}
	\dim_P\left\{\n\in S^2:\dim_H\pi_{\n}\mu<\min\left\{1,\dim_H\mu\right\}\right\}\leq1.
	\end{equation}
\end{prop}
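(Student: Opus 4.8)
The plan is to cut the two-parameter family of line projections in $\R^3$ into one-parameter pencils, each of which is a family of planar projections to which the planar exceptional-set theorem \cite[Theorem~1.8]{H2} applies, and then to reassemble the fibrewise estimates. Set $\tau=\min\{1,\dim_H\mu\}$ and $B=\{\n\in S^2:\dim_H\pi_{\n}\mu<\tau\}$. Since $\dim_H\pi_{\n}\mu=\dim_H\pi_{-\n}\mu$, the set $B$ is symmetric, and its intersection with the equator $\{n_3=0\}$ lies on a single great circle, so $\dim_P(B\cap\{n_3=0\})\le1$. By the finite stability of packing dimension it therefore suffices to bound $B$ inside the chart $\{n_3\neq0\}$. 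Writing $\n\propto(a,b,1)$ and using that rescaling the direction does not affect the dimension of the projected measure, $\dim_H\pi_{\n}\mu=\dim_H(ax+by+z)_*\mu$, so it is enough to show that
\[
B_3:=\left\{(a,b)\in\R^2:\dim_H(ax+by+z)_*\mu<\tau\right\}
\]
has $\dim_PB_3\le1$.

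Before the fibrewise step I would normalise coordinates. An orthogonal change of variables $O$ conjugates the homothetic IFS of $\mu$ to a homothetic IFS with the same ratios --- crucially, rotations commute with the scalars $\lambda_i$ --- preserving the SSC, and moves $B$ by an isometry; hence it changes neither the hypotheses nor $\dim_PB$. Using Marstrand's theorem \cite{Mar} I pick $O$ so that the $(y,z)$-plane is generic for $\mu$; then the planar marginal $R\mu:=(y,z)_*\mu$ is again a homothetic self-similar measure and $\dim_HR\mu=\min\{2,\dim_H\mu\}\ge\tau$. (If $\spt\mu$ is contained in a plane the problem is two-dimensional and $B$ fibres over the packing-zero planar exceptional set with one-dimensional fibres, giving $\dim_PB\le1$ at once; so I assume $\mu$ is non-degenerate.) Now fix $b$ and factor $ax+by+z=a\,x+(by+z)$, so that $(ax+by+z)_*\mu=\pi^{(2)}_a(H_b\mu)$ where $H_b\mu:=(x,by+z)_*\mu$ is a planar homothetic self-similar measure and $\pi^{(2)}_a$ is the projection of $\R^2$ onto $(a,1)$. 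As $H_b\mu$ is self-similar (overlaps are permitted by \cite[Theorem~1.8]{H2}), the planar theorem gives $\dim_P\{a:\dim_H\pi^{(2)}_a(H_b\mu)<\min\{1,\dim_HH_b\mu\}\}=0$.

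It remains to control the intermediate dimension $\dim_HH_b\mu$. Projecting $H_b\mu$ to its second coordinate yields $(by+z)_*\mu=\pi^{(2)}_b(R\mu)$, so $\dim_HH_b\mu\ge\dim_H\pi^{(2)}_b(R\mu)$; applying \cite[Theorem~1.8]{H2} to $R\mu$ and invoking $\dim_HR\mu\ge\tau$ shows that $\Theta:=\{b:\dim_HH_b\mu<\tau\}$ has $\dim_P\Theta=0$. Hence for $b\notin\Theta$ we have $\min\{1,\dim_HH_b\mu\}\ge\tau$, so the $a$-section of $B_3$ over such $b$ has packing dimension $0$.

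Finally I would assemble the pencils: splitting $B_3=(B_3\cap(\Theta\times\R))\cup(B_3\cap(\Theta^c\times\R))$, the first piece has $\dim_P\le\dim_P\Theta+\dim_P\R=1$ by the product inequality for packing dimension, while the second has every vertical section of packing dimension $0$ over a base contained in $\R$, so a Fubini-type slicing inequality for packing dimension bounds it by $1$. This yields $\dim_PB_3\le1$ and hence \eqref{eexcept}. The crux of the whole argument lies in the last two steps: the preliminary Marstrand normalisation is exactly what keeps the intermediate planar dimension $\dim_HH_b\mu$ from collapsing on more than a packing-zero set of parameters $b$ --- a three-dimensional phenomenon with no analogue in the planar case --- and the reassembly of the fibrewise packing-zero estimates must be carried out with the correct slicing inequality for packing dimension, where one uses that the exceptional sections produced by \cite[Theorem~1.8]{H2} inherit enough regularity from the self-similar scaling to make the slicing valid. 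I expect this reassembly, rather than any single fibrewise estimate, to be the main obstacle.
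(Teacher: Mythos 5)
There is a genuine gap, and it is exactly where you suspected it: the final ``Fubini-type slicing inequality for packing dimension'' does not exist. For a set $E\subseteq\R^2$ fibred over a one-dimensional base, the bound $\dim_PE\le\dim_P(\text{base})+\sup_b\dim_P(E_b)$ is false: the graph of a typical continuous function $f:[0,1]\to\R$ has packing dimension $2$, while its base is $[0,1]$ and every vertical section is a single point. So knowing that each $a$-section of $B_3\cap(\Theta^c\times\R)$ has packing dimension $0$ gives no bound better than the trivial $\dim_PB_3\le2$. (The analogous inequality fails for Hausdorff dimension too.) The hoped-for rescue --- that the exceptional sections ``inherit enough regularity from the self-similar scaling'' --- has no basis in what you actually invoke: \cite[Theorem~1.8]{H2}, applied separately for each fixed $b$, is a purely qualitative statement whose implicit covering scales depend on $b$ in an uncontrolled way, and the exceptional $a$-sets can drift with $b$ arbitrarily. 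To bound the dimension of the union over $b$ one needs scale-by-scale covering estimates that are uniform in \emph{both} parameters simultaneously, i.e.\ genuine two-parameter transversality; no fibrewise application of the one-parameter theorem can produce this. (Your earlier steps are essentially sound, modulo a small wrinkle: for countably many $b$ the map $H_b$ creates exact collapses $H_b\rho(\ii)=H_b\rho(\jj)$, so the non-degeneracy hypothesis of \cite[Theorem~1.8]{H2} fails for the pencil in $a$; this is harmless since countably many lines have packing dimension $1$, but it should be said. The product inequality $\dim_P(\Theta\times\R)\le\dim_P\Theta+1$ for the first piece is fine, as is the degenerate planar case, which really is bi-Lipschitz to a product.)

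For comparison, the paper disposes of the proposition in one line by citing Hochman \cite[Theorem~1.10]{H3}, the multi-parameter $\R^d$ analogue of \cite[Theorem~1.8]{H2}: applied to the real-analytic two-parameter family $\n\mapsto\pi_{\n}\mu$ on $S^2$ (non-degenerate because the SSC forces $\rho(\ii)\neq\rho(\jj)$ for $\ii\neq\jj$, so no difference function $\n\mapsto\pi_{\n}(\rho(\ii)-\rho(\jj))$ vanishes identically), it yields the packing dimension bound $2-1=1$ directly. The paper's source also contains a commented-out hands-on proof that shows what a correct version of your plan must look like: a transversality lemma giving $\max\left\{|\Delta_{\il,\jl}(u,v)|,|\partial_u\Delta_{\il,\jl}(u,v)|,|\partial_v\Delta_{\il,\jl}(u,v)|\right\}>\delta\lambda^n$ uniformly over the two-dimensional parameter space, whence each sublevel set $\Delta_{\il,\jl}^{-1}(-\varepsilon^n,\varepsilon^n)$ lies in a $C(\varepsilon/\lambda)^n$-neighbourhood of a smooth curve and is covered by $\sim(\lambda/\varepsilon)^n$ balls of radius $(\varepsilon/\lambda)^n$; summing over the exponentially many pairs of cylinders gives upper box dimension $1+2\log|\mathcal{S}|/\log(\varepsilon/\lambda)\to1$. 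The quantitative, simultaneous control in $(u,v)$ is precisely the ingredient your pencil-by-pencil reduction cannot supply.
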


Proposition~\ref{texcept} follows from Hochman~\cite[Theorem~1.10]{H3}.

Finally, we state here the dimension conservation phenomena for HSS measures, first showed by Furstenberg \cite{Fu} and generalized by Falconer and Jin \cite{FJ}.

\begin{theorem}\label{tdincon&lower}
	Let $\mu$ be an HSS measure with SSC in $\R^d$ and let $\pi\in\Pi_{d,k}$ be arbitrary. Then
	\begin{equation}\label{edimc}
	\dim_H\pi\mu+\dim_H\mu_{\pi^{-1}(\x)}=\dim_H\mu\text{ for $\pi\mu$-a.e. $\x\in\R^k$,}
	\end{equation}
	where $\mu_{\pi^{-1}(\x)}$ denote the conditional measures of $\mu$ on the fibres $\pi^{-1}(\x)$. Moreover,
	\begin{equation}\label{elowersemi}
	\pi\mapsto\dim_H\pi\mu\text{ is lower semi-continuous.}
	\end{equation}
\end{theorem}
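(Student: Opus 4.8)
The plan is to combine the exact dimensionality of self-similar measures with the entropy chain rule and Furstenberg's scaling dynamics. First I set up the symbolic model: write $\mu=\rho_*\nu$ with $\nu=\underline{p}^{\mathbb N}$ Bernoulli, and put $h=-\sum_i p_i\log p_i$ and $\chi=-\sum_i p_i\log\lambda_i$. Under the SSC the coding $\rho$ is injective, so by the Shannon--McMillan--Breiman theorem and the law of large numbers for the contraction rates, $\mu$ is exact dimensional with $\dim_H\mu=h/\chi$ (see \cite{FLR}). The image $\pi\mu$ is again a self-similar measure, with IFS $\left\{\lambda_i x+\pi(\underline{t}_i)\right\}_{i=1}^q$ (now possibly with overlaps), and it too is exact dimensional, exactly as used in the proof of Theorem~\ref{tHochman}; write $s:=\dim_H\pi\mu\leq\min\{k,\dim_H\mu\}$. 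Finally I disintegrate $\nu=\int\nu_{\x}\,d(\pi\mu)(\x)$ along the fibres of $\pi\circ\rho$, so that $\mu_{\pi^{-1}(\x)}=\rho_*\nu_{\x}$; the aim is to prove these conditionals are exact dimensional of dimension $\dim_H\mu-s$ for $\pi\mu$-a.e.\ $\x$.

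The backbone is the entropy chain rule. Let $\mathcal D_r$ be the partition of $\R^d$ into $r$-mesh cubes, $\mathcal D_r^{(k)}$ the analogous partition of $\R^k$, and $\mathcal E_r=\pi^{-1}\mathcal D_r^{(k)}$ the induced partition into slabs. Since $\mathcal D_r$ refines $\mathcal E_r$ up to bounded multiplicity, the chain rule gives the identity
\[
H(\mu,\mathcal D_r)=H(\pi\mu,\mathcal D_r^{(k)})+F_r(\pi)+O(1),\qquad F_r(\pi):=H(\mu,\mathcal D_r\mid\mathcal E_r).
\]
Dividing by $\log(1/r)$ and letting $r\to0$, exact dimensionality turns the first two terms into $\dim_H\mu$ and $s$; hence the fibre--entropy term converges and $\dim_H\mu=s+\lim_{r\to0}F_r(\pi)/\log(1/r)$. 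This already yields the conservation formula \eqref{edimc} in an averaged form, the remaining task being to recognise the limit $\lim_r F_r(\pi)/\log(1/r)$ as the $\pi\mu$-essential value of the fibre dimensions $\dim_H\mu_{\pi^{-1}(\x)}$, and to show that value is $\pi\mu$-a.e.\ the constant $\dim_H\mu-s$.

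This last step is where the homogeneity is essential, and I expect it to be the main obstacle. Here I would invoke Furstenberg's framework: $\mu$ is a homogeneous uniformly scaling measure generating an ergodic fractal distribution $P$ --- precisely the input to Theorem~\ref{tHochman} via \cite[Example~4.3]{H1} and \cite[Proposition~1.36]{H1}. Under the scaling flow the fibres reproduce themselves statistically, so the conditional measures $\mu_{\pi^{-1}(\x)}$ are driven by the same ergodic dynamics; their lower local dimension is therefore $\pi\mu$-a.e.\ constant, and by the ergodic theorem this constant must coincide with the averaged value $\dim_H\mu-s$ forced above. Establishing the exact dimensionality of the conditional measures and identifying their dimension with the conditional-entropy limit is exactly the content of Furstenberg's dimension-conservation theorem \cite{Fu} and its extension by Falconer and Jin \cite{FJ}, whose argument I would follow.

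For the lower semicontinuity \eqref{elowersemi} I would pass through the representation $\dim_H\pi\mu=E_P(\pi)=\int\dim_H\pi\omega\,dP(\omega)$ obtained in the proof of Theorem~\ref{tHochman}. The subtlety here is that the projected dimension must not drop in the limit $\pi_n\to\pi$, even though directions with deficient projected dimension may accumulate at $\pi$; a single exact-dimensional measure need not have lower semicontinuous projection dimension. The resolution is that the rigidity of the ergodic distribution $P$ --- all of its micromeasures being uniformly scaling --- forces a semicontinuity of $\pi\mapsto E_P(\pi)$, which then transfers to $\dim_H\pi\mu$. I would establish this following the analysis of Falconer and Jin \cite{FJ}, which completes the proof.
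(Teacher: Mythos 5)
The paper offers no argument of its own for Theorem~\ref{tdincon&lower}: it disposes of both assertions with the single citation to Hochman \cite[Theorem~1.37]{H1}. Your outline is therefore not a different route but a reconstruction of the machinery behind that citation: an HSS measure with SSC is a homogeneous uniformly scaling measure generating an ergodic fractal distribution $P$, for which \cite[Theorem~1.37]{H1} gives exactly \eqref{edimc}, while the lower semicontinuity of $\pi\mapsto E_P(\pi)$ combined with the identity $\dim_H\pi\mu=E_P(\pi)$ for \emph{every} $\pi$ (which homogeneity supplies via \cite[Proposition~1.36]{H1}, just as in the proof of Theorem~\ref{tHochman}) gives \eqref{elowersemi}. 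Your entropy chain-rule computation is sound as far as it goes, and you correctly diagnose its limitation: it only yields the averaged identity $\lim_{r\to0}H(\mu,\mathcal{D}_r\mid\mathcal{E}_r)/\log(1/r)=\dim_H\mu-\dim_H\pi\mu$, and upgrading this to the $\pi\mu$-a.e.\ pointwise statement about genuine fibre conditionals (exact dimensionality and a.e.\ constancy of $\dim_H\mu_{\pi^{-1}(\x)}$, which requires exchanging the scale limit with the disintegration) is precisely the content of the theorem being cited --- so delegating it is legitimate, and indeed the paper does the same, only more tersely. Two attributions in your sketch should be corrected: the lower semicontinuity of $E_P$ is already Hochman's \cite[Theorem~1.23]{H1} (invoked earlier in the paper's proof of Theorem~\ref{tHochman}), not something to be extracted from Falconer--Jin; and Furstenberg's dimension conservation \cite{Fu} concerns homogeneous \emph{sets} rather than conditional measures, so the measure-theoretic statement you need is really \cite{FJ} or \cite[Theorem~1.37]{H1} itself.
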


For the proof of the theorem we refer to Hochman \cite[Theorem~1.37]{H1}. 

\section{Radial projection in $\R^3$}

\indent

The critical point of our study is the examination of the radial projection. Unfortunately, we cannot prove the analogue of Corollary~\ref{cradproj} in general. However, we are able to show that if an HSS set has dimension strictly larger than $1$ then there exists a HHSS measure such that its support is contained in the HSS set, and its radial projection has dimension strictly larger than $1$.

\begin{theorem}\label{tradproj3}
	Let $\Lambda$ be a HSS set in $\R^3$ such that $\dim_H\Lambda>1$ and $\Lambda$ is not contained in any plane. Then there exists a $\mu$ HHSS measure such that $\mathrm{spt}\mu\subseteq\Lambda$ and $\dim_H\mu\geq\dim_H(P_3)_*\mu>1$.
\end{theorem}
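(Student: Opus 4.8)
The plan is to replace $\Lambda$ by a homogeneous subsystem, factor the radial projection into an \emph{azimuthal} and a \emph{latitudinal} coordinate, and show that the azimuth already carries dimension $1$ (by the planar radial estimate Corollary~\ref{cradproj}) while the latitude adds a strictly positive amount (extracted from dimension conservation). First I would apply Proposition~\ref{psubsystem} to replace $\Lambda$ by the attractor of a HHSS system with SSC, carrying its natural self-similar measure $\mu$ with $s:=\dim_H\mu>1$ and with $\spt\mu$ not contained in any plane; replacing $\mu$ by a cylinder $\mu_{\il}$ I may also assume $\spt\mu$ lies in an open cone bounded away from the origin. Then $P_3$ is Lipschitz on $\spt\mu$, so $\dim_H(P_3)_*\mu\le\dim_H\mu$ is automatic and only $\dim_H(P_3)_*\mu>1$ remains.

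Using that the exceptional set of Proposition~\ref{texcept} has dimension at most $1$ together with the lower semicontinuity \eqref{elowersemi}, I would fix a unit vector $\n$ so that the orthogonal projection $\pi_W\mu$ onto the equatorial plane $W=\n^{\perp}$ is a non-degenerate planar HHSS measure (with SSC, not supported on a line, $\underline 0\notin\spt\pi_W\mu$) of dimension $\dim_H\pi_W\mu\ge1$; a generic $\n$ works since the degenerate choices lie in a lower-dimensional set. Let $F$ send $\x$ off the $\n$-axis to $(\text{azimuth}(\x),\text{latitude}(\x))\in S^1_W\times(-\pi/2,\pi/2)$, where $\text{azimuth}(\x)$ is the direction of $\pi_W\x$ in $W$ and $\text{latitude}(\x)$ is the signed angle of $\x$ to $W$. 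Since $P_3$ and $F$ differ by a diffeomorphism of the sphere, $\dim_H(P_3)_*\mu=\dim_HF_*\mu$. The azimuth factors as $P_2\circ\pi_W$, so its marginal is $(P_2)_*\pi_W\mu$, which by Corollary~\ref{cradproj} has dimension $\min\{1,\dim_H\pi_W\mu\}=1$.

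The fibre of the azimuth over a direction $\underline u$ is the half-plane $P_{\underline u}=\mathrm{span}^{+}(\underline u,\n)$, on which the latitude is exactly the planar radial coordinate. Working on the product $S^1_W\times(-\pi/2,\pi/2)$ I would apply the elementary slicing lower bound — the Hausdorff dimension of a measure is at least the dimension of its marginal plus the essential infimum of the dimensions of the conditional measures on the fibres — to $F_*\mu$ and the azimuth coordinate, obtaining
\[
\dim_H(P_3)_*\mu=\dim_HF_*\mu\ \ge\ 1+\essinf_{\underline u}\dim_H(P_2)_*\mu_{P_{\underline u}},
\]
where $\mu_{P_{\underline u}}$ is the conditional of $\mu$ on the azimuthal half-plane and $P_2$ is the radial projection inside $P_{\underline u}$. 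To control the conditionals I would feed in the dimension conservation phenomenon (Theorem~\ref{tdincon&lower}), in the form valid for smooth maps of uniformly scaling measures: since the azimuth $P_2\circ\pi_W$ has image dimension $1$, conservation gives $\dim_H\mu_{P_{\underline u}}=s-1>0$ for almost every $\underline u$. If these conditionals obeyed the planar radial estimate, their radial projections would have dimension $\min\{1,s-1\}>0$, and the display would yield $\dim_H(P_3)_*\mu\ge1+\min\{1,s-1\}>1$.

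The main obstacle is this very last input. The fibre measures $\mu_{P_{\underline u}}$ are not self-similar, so Corollary~\ref{cradproj} does not apply to them directly, and an arbitrary measure of positive dimension can collapse under a radial projection (for instance if it were concentrated on a single ray). I expect to circumvent this by treating the whole disintegration at once rather than one fibre at a time: the conditional measures of a HHSS measure under a smooth map are, after rescaling, governed by the same ergodic fractal distribution that underlies Theorem~\ref{tHochman} and Corollary~\ref{cradproj}, so the positivity $\dim_H(P_2)_*\mu_{P_{\underline u}}>0$ should hold for $(P_2)_*\pi_W\mu$-almost every $\underline u$, with the null exceptional set controlled by Lemma~\ref{ltech1} and by the hypothesis that $\spt\mu$ is not contained in any plane (which prevents $\mu$ from concentrating on a cone through the origin). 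Establishing this fibrewise positivity, uniformly enough to bound the essential infimum away from $0$, is the technical heart of the argument; the choice of $\n$, the diffeomorphism $F$, and the slicing inequality are comparatively routine.
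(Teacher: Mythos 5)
Your skeleton---azimuth marginal of dimension $1$ via Corollary~\ref{cradproj}, plus a slicing inequality adding positive fibre dimension (this is exactly the paper's final step, via \cite[Lemma~6.13]{H1})---matches the paper, but the step you yourself flag as ``the technical heart'' is a genuine gap, and the route you sketch toward it would not close. Two concrete problems. First, Theorem~\ref{tdincon&lower} is stated for orthogonal projections $\pi\in\Pi_{d,k}$: it disintegrates $\mu$ over the \emph{lines} $\pi_W^{-1}(\x)$ with conditional dimension $\dim_H\mu-\dim_H\pi_W\mu$, and says nothing about the nonlinear azimuth map $P_2\circ\pi_W$, so your claim $\dim_H\mu_{P_{\underline{u}}}=s-1$ on half-plane fibres is unjustified by anything you cite. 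Worse, your genericity instinct works against you: for a generic $\n$ one expects \emph{no} dimension drop, $\dim_H\pi_W\mu=\min\{2,s\}=s$, so the line-conditionals that dimension conservation actually controls have dimension $0$, and there is nothing to feed into the fibres. Finally, even granting positive-dimensional half-plane conditionals, you correctly observe that they are not self-similar and may collapse under the in-fibre radial projection, and you offer no mechanism to rule this out---so the essential infimum in your display is not bounded away from $0$ by anything in the proposal.

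The paper resolves this by inverting the genericity: Proposition~\ref{pgoodmeasure} chooses the planar projection $\pi$ \emph{non-generically}, engineering (via the cone Lemmas~\ref{lnotinplane} and~\ref{lproj2to1hoz} a direction in which $\pi_2$ is $2$-to-$1$ and hence dimension-preserving, then using lower semicontinuity \eqref{elowersemi} and an exact overlap between two projected fixed points) a HHSS measure $\mu$ with a \emph{strict} drop $\dim_H\mu>\dim_H\pi\mu>1$. The surplus $\dim_H\mu-\dim_H\pi\mu>0$ then sits on the fibres of the orthogonal projection $\pi$---straight lines avoiding the origin---on which $P_3$ is bi-Lipschitz, so radial collapse is impossible by construction. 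Lemma~\ref{lcondmeasure} disintegrates twice (over the azimuth $\varphi$, then over $r$), writing each meridian conditional of $\nu=(P_3)_*\mu$ as an average of bi-Lipschitz images of the line-conditionals $\mu_{\pi^{-1}(r,\varphi)}$, each of dimension exactly $\dim_H\mu-\dim_H\pi\mu>0$ by Theorem~\ref{tdincon&lower}; the slicing bound then gives $\dim_H\nu\geq 1+\left(\dim_H\mu-\dim_H\pi\mu\right)>1$. The missing idea in your proposal is precisely this: rather than attempting radial non-collapse for arbitrary fibre measures, manufacture the positive fibre dimension transversally, along lines where $P_3$ cannot collapse anything, by forcing an exact overlap in the projected IFS.
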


Let us denote the closed double cone with vertex $\x\in\R^3$, angle $\alpha$, and axis $\underline{v}\in\R^3$ with $\|\vv\|=1$ by $C_{\alpha,\underline{v}}(\x)$. That is,
$$C_{\alpha,\underline{v}}(\x)=\left\{\underline{y}\in\R^3:|<\x-\underline{y},\underline{v}>|\geq|\cos(\alpha)|\|\x-\y\|\right\}.$$
In other words, the angle between $\x-\y$ and $\vv$ is less than or equal to $\alpha$. First, we show the following lemma.

\begin{lemma}\label{lnotinplane}
	Let $\Lambda$ be a HHSS set in $\R^3$ such that it is not contained in any plane. Then for every vector $\underline{v}\in\R^3$ with $\|\vv\|=1$ and $\x\in\Lambda$ there exists an $\pi/2>\alpha>0$ such that for every $r>0$
	$$\mathrm{int}(B_r(\x)\cap C_{\alpha,\vv}(\x))\cap\Lambda\neq\emptyset,$$
	where $\mathrm{int}(A)$ denotes the interior of a set $A$.
\end{lemma}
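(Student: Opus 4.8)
The plan is to reduce the assertion to a statement about the directions along which $\Lambda$ accumulates at $\x$, and then to exploit the homothetic (rotation-free) self-similarity to identify those directions with the directions from a single point of $\Lambda$ to all of $\Lambda$. Throughout I use that a HHSS set is the attractor of an IFS $\left\{g_j(\x)=\lambda\x+\underline{t}_j'\right\}_{j=1}^{q'}$ with common ratio $\lambda\in(0,1)$ and SSC, and that, since $\Lambda$ is not planar, $q'\geq2$.

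First I fix $\vv$ and note that it suffices to produce a single unit vector $\underline{w}$, an angle $\alpha\in(0,\pi/2)$, and a sequence $\y_k\in\Lambda$ with $\y_k\to\x$, $\y_k\neq\x$, such that $P_3(\y_k-\x)\to\underline{w}$ and $|<\underline{w},\vv>|>\cos\alpha$. Indeed, once $\underline{w}$ and $\alpha$ are fixed, then for every $r>0$ all sufficiently large $k$ satisfy the strict inequalities $\|\y_k-\x\|<r$ and $|<\x-\y_k,\vv>|>\cos\alpha\,\|\x-\y_k\|$; as these are open conditions away from $\x$, such a $\y_k$ lies in $\mathrm{int}(B_r(\x)\cap C_{\alpha,\vv}(\x))$. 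Thus the entire difficulty is to exhibit a limiting tangent direction $\underline{w}$ of $\Lambda$ at $\x$ that is not orthogonal to $\vv$.

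To build it, write $\x=\rho(\ii)$ and consider the shifted points $\x_n:=\rho(\sigma^n\ii)\in\Lambda$; by compactness of $\Lambda$ I pass to a subsequence along which $\x_n\to\x^*\in\Lambda$. The crux is a blow-up identity coming from homotheticity: writing $\il=(i_0,\dots,i_{n-1})$, the composition $g_{\il}(\z)=\lambda^n\z+\underline{c}_{\il}$ has linear part $\lambda^n I$ with no rotation, so for any $\y^*\in\Lambda\setminus\{\x^*\}$ the point $\y_n:=g_{\il}(\y^*)\in\Lambda$ satisfies $\y_n-\x=\lambda^n(\y^*-\x_n)$. Consequently $P_3(\y_n-\x)=P_3(\y^*-\x_n)$ while $\|\y_n-\x\|=\lambda^n\|\y^*-\x_n\|\to0$, so along the subsequence $\y_n\to\x$ and $P_3(\y_n-\x)\to P_3(\y^*-\x^*)$ (the directions are well defined for large $n$ since $\|\y^*-\x_n\|\to\|\y^*-\x^*\|>0$). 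In other words, every direction $P_3(\y^*-\x^*)$ with $\y^*\in\Lambda\setminus\{\x^*\}$ is a limiting tangent direction of $\Lambda$ at $\x$.

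Finally I invoke non-planarity. Since $\Lambda$ is not contained in the plane $\{\z:<\z-\x^*,\vv>=0\}$, there exists $\y^*\in\Lambda$ with $<\y^*-\x^*,\vv>\neq0$; set $\underline{w}=P_3(\y^*-\x^*)$ and $c=|<\underline{w},\vv>|>0$, and choose any $\alpha\in(\arccos c,\pi/2)$, which is a nonempty interval because $c>0$. Together with the previous step this furnishes exactly the direction $\underline{w}$ and the sequence $\y_k=g_{\il}(\y^*)$ required by the reduction, which completes the proof. The main obstacle is precisely the blow-up identity $\y_n-\x=\lambda^n(\y^*-\x_n)$: homotheticity is essential here, since a nontrivial rotation in $g_{\il}$ would scramble the directions $P_3(\y^*-\x_n)$ as $n$ varies and destroy their convergence to a single $\underline{w}$. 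This identity is the device that converts a purely local accumulation question at $\x$ into the global, coordinate-free fact that $\Lambda$ is not contained in any plane.
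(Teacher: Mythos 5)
Your proof is correct and rests on exactly the same mechanism as the paper's: the shifted points $\rho(\sigma^n\ii)$ (which are precisely the paper's $f_{\il}^{-1}(\x)$), a compactness limit point, the fact that homothetic maps preserve directions, and non-planarity applied at that limit point. The only difference is presentational --- you run the argument directly, pushing a suitable direction $P_3(\y^*-\x^*)$ forward by $g_{\il}$ into every neighbourhood of $\x$, whereas the paper argues by contradiction, pulling the empty cone back to the limit point and concluding $\Lambda$ would lie in a plane.
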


\begin{proof}
	We argue by contradiction. Assume that there exist vector $\underline{v}\in\R^3$ with $\|\vv\|=1$ and $\x\in\Lambda$ such that for every $\pi/2>\alpha>0$ there exists an $r=r(\alpha)>0$ that $$\mathrm{int}(B_r(\x)\cap C_{\alpha,\vv}(\x))\cap\Lambda=\emptyset.$$ Let $\Phi=\left\{f_i(\x)=\lambda\x+\underline{t}_i\right\}_{i=1}^q$ be the corresponding IFS and let $n(r)=\min\left\{n:\lambda^n<r\right\}$.
	
	For a $\pi/2>\alpha>0$ if $\x\in\Lambda_{\il}$ with $|\il|\geq n(r(\alpha))$ then $\Lambda_{\il}\subseteq \mathrm{int}(B_{r(\alpha)}(\x))\cap\Lambda$. Thus by our assumption $\Lambda_{\il}\cap \mathrm{int}(C_{\alpha,\vv}(\x))=\emptyset.$ Since $\Phi$ does not contain any orthogonal transformation. $$\Lambda\cap\mathrm{int}(C_{\alpha,\vv}(f_{\il}^{-1}(\x)))=f_{\il}^{-1}(\Lambda_{\il}\cap\mathrm{int}(C_{\alpha,\vv}(\x)))=\emptyset.$$
	Thus, for every $\pi/2>\alpha>0$ there exists a $N\geq1$ such that for every $\il$ with $\x=\rho(\ii)=f_{\il}(\rho(\sigma^{|\il|}\ii))$ and $|\il|\geq N$
	$$\Lambda\cap\mathrm{int}(C_{\alpha,\vv}(f_{\il}^{-1}(\x)))=\emptyset.$$
	Let $\y$ be a density point of the sequence $\left\{f_{\il}^{-1}(\x)\right\}$. Since $\Lambda$ is compact, $\y\in\Lambda$ and $\Lambda\cap \mathrm{int}C_{\alpha,\vv}(\y)=\emptyset.$ But $\alpha$ was arbitrary, thus $\Lambda$ must be contained in a plane with normal vector $\vv$ and containing $\y$ which is a contradiction.	
\end{proof}

Denote by $V_{\pi}$ the subspace to which $\pi\in\Pi_{3,2}$ projects and denote the normal vector of $V_{\pi}$ by $\underline{n}_{\pi}\in\R^3$ with $\|\underline{n}_{\pi}\|=1$. For a projection $\pi\in\Pi_{3,2}$, let
\begin{equation}\label{eepszilon}
\sin(\varepsilon_{\pi})=\inf_{\x\neq\underline{y}\in\Lambda}\frac{\|\underline{n}_{\pi}\times(\x-\underline{y})\|}{\|\x-\underline{y}\|}.
\end{equation}
Since $\Lambda$ is compact, if $\pi\Lambda$ satisfies the SSC then $\sin(\varepsilon_{\pi})>0$ . Let $\Gamma_{\pi}$ be as follows
$$\Gamma_{\pi}=\left\{(\x,\y)\in\Lambda\times\Lambda:\x\neq\y\ \&\ \sin(\varepsilon_{\pi})=\frac{\|\underline{n}_{\pi}\times(\x-\underline{y})\|}{\|\x-\underline{y}\|}\right\}.$$
Since $\Phi=\{f_i\}_{i=1}^q$ is orthogonal transformation free,
$$\sin(\varepsilon_{\pi})=\min_{i\neq j}\inf_{\x\in f_i(\Lambda),\underline{y}\in f_j(\Lambda)}\frac{\|\underline{n}_{\pi}\times(\x-\underline{y})\|}{\|\x-\underline{y}\|}.$$
Moreover, by compactness, there are $i\neq j$, $\x_i\in f_i(\Lambda)$, $\y_j\in f_j(\Lambda)$ such that $(\x_i,\y_j)\in\Gamma_{\pi}\neq\emptyset$.
By definition
\begin{equation}\label{eempty}
\mathrm{int}\left(C_{\varepsilon_{\pi},\n_{\pi}}(\x)\right)\cap\Lambda=\emptyset\text{ for every $\x\in\Lambda$.}
\end{equation}	

\begin{figure}
	\centering
	\includegraphics[width=0.4\linewidth]{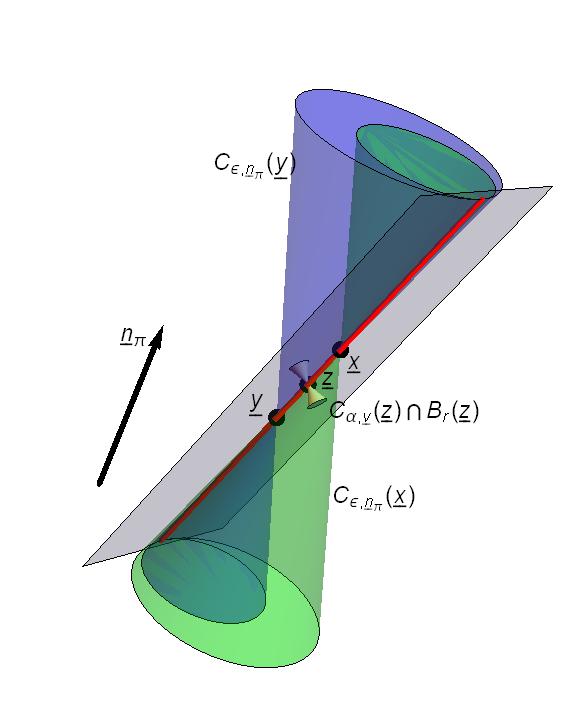}
	\caption{Cones and points for $(\x,\y),(\x,\z),(\y,\z)\in\Gamma_{\pi}$.}\label{fig:conepic}
\end{figure}
	
\begin{lemma}\label{lproj2to1hoz}
	Let $\Lambda$ be a HHSS set not contained in any plane and suppose that $\pi\Lambda$ satisfies the SSC. If $(\x,\y),(\x,\z)\in\Gamma_{\pi}$ then $(\y,\z)\notin\Gamma_{\pi}$. Thus, if $(\x,\y)\in\Gamma_{\pi}$ then $l_{\x,\y}\cap\Lambda\backslash\{\x,\y\}=\emptyset$, where $l_{\x,\y}$ is the line containing $\x$ and $\y$.
\end{lemma}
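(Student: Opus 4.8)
The plan is to argue the first assertion by contradiction and then obtain the line statement formally. Assume all three pairs $(\x,\y),(\x,\z),(\y,\z)$ lie in $\Gamma_{\pi}$. First I would rewrite membership in $\Gamma_{\pi}$ in the most convenient geometric form: by the definition \eqref{eepszilon} of $\varepsilon_{\pi}$, a pair $(\underline{p},\underline{q})$ lies in $\Gamma_{\pi}$ exactly when $|\langle \underline{p}-\underline{q},\n_{\pi}\rangle|=\cos(\varepsilon_{\pi})\|\underline{p}-\underline{q}\|$, i.e. the difference makes the minimal angle $\varepsilon_{\pi}$ with the axis $\n_{\pi}$, equivalently $\underline{q}$ lies on the boundary of the double cone $C_{\varepsilon_{\pi},\n_{\pi}}(\underline{p})$. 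Note $\varepsilon_{\pi}\in(0,\pi/2)$: $\sin(\varepsilon_{\pi})>0$ since $\pi\Lambda$ satisfies the SSC, while $\sin(\varepsilon_{\pi})=1$ would force every difference vector orthogonal to $\n_{\pi}$ and hence $\Lambda$ into a plane, which is excluded.

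The first step is to show that the contradiction hypothesis forces $\x,\y,\z$ to be collinear. I would set $e_1=\y-\x$, $e_2=\z-\y$, $e_3=\x-\z$, so $e_1+e_2+e_3=\underline{0}$. Pairing with $\n_{\pi}$ gives $\sum_k\langle e_k,\n_{\pi}\rangle=0$, and each term equals $\sigma_k\cos(\varepsilon_{\pi})\|e_k\|$ with a sign $\sigma_k\in\{\pm1\}$ and $\|e_k\|>0$; since $\cos(\varepsilon_{\pi})>0$ the three signs cannot coincide, so two agree and the third is opposite. After relabelling this reads $\|e_i\|+\|e_j\|=\|e_k\|$, i.e. equality in the triangle inequality, which places $\x,\y,\z$ on a common line $L$ with one of them, say $m$, strictly between the other two $o_1,o_2$; moreover $L$ has direction making angle $\varepsilon_{\pi}$ with $\n_{\pi}$. (The same conclusion follows from the explicit scalar-product computation with $\n_{\pi}$ vertical, which forces the horizontal parts of the three differences to be parallel.)

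The heart of the argument is to rule out this collinear configuration, and this is where I expect the real difficulty to lie: cone-emptiness \eqref{eempty} alone is perfectly consistent with three collinear points, since each then sits on the \emph{boundary}, not the interior, of the others' cones. To break this I would perturb the middle point $m$ and use that $\Lambda$ is not contained in any plane. Placing $\n_{\pi}$ vertical and writing the horizontal direction of $L$ as a unit vector $u$, a first-order expansion shows that for a small displacement $\eta$ the difference $o_2-(m+\eta)$ has angle with $\n_{\pi}$ strictly below $\varepsilon_{\pi}$ precisely when $\langle\eta,\,u-\tan(\varepsilon_{\pi})\n_{\pi}\rangle>0$, while $o_1-(m+\eta)$ does so precisely when the same inner product is $<0$; the two half-spaces are complementary exactly because $m$ lies \emph{between} $o_1$ and $o_2$, and this is what disposes of the ambiguity between the two nappes of the cone. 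Now apply Lemma~\ref{lnotinplane} at $m$ with $\vv$ the unit normal of the critical plane $\{\langle\cdot-m,\,u-\tan(\varepsilon_{\pi})\n_{\pi}\rangle=0\}$: it yields points $\underline{w}\in\Lambda$ arbitrarily close to $m$ with $\langle\underline{w}-m,\vv\rangle$ bounded away from $0$ relative to $\|\underline{w}-m\|$, so for $\underline{w}$ close enough the linear term dominates the $O(\|\underline{w}-m\|^2)$ remainder and $\underline{w}$ lands in $\mathrm{int}\,C_{\varepsilon_{\pi},\n_{\pi}}(o_2)$ or $\mathrm{int}\,C_{\varepsilon_{\pi},\n_{\pi}}(o_1)$ according to the sign of $\langle\underline{w}-m,\vv\rangle$. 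Since $\underline{w},o_1,o_2\in\Lambda$, either case contradicts \eqref{eempty}, completing the proof of the first claim.

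Finally, the line version follows formally: if $(\x,\y)\in\Gamma_{\pi}$ and some $\z\in l_{\x,\y}\cap\Lambda$ with $\z\notin\{\x,\y\}$, then $\x-\z$ and $\y-\z$ are parallel to $\x-\y$, so all three differences make the angle $\varepsilon_{\pi}$ with $\n_{\pi}$; hence $(\x,\z),(\y,\z)\in\Gamma_{\pi}$ as well, contradicting the first claim. Thus $l_{\x,\y}\cap\Lambda=\{\x,\y\}$.
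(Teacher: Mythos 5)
Your proof is correct, and its overall architecture is the same as the paper's: force $\x,\y,\z$ onto a line, take the middle point, apply Lemma~\ref{lnotinplane} with axis $\vv$ normal to the common tangent plane of the two extremal cones, and contradict \eqref{eempty}. Two of your sub-steps differ in ways worth recording. First, the paper dismisses collinearity with ``it is easy to see'' (pointing at Figure~\ref{fig:conepic}: $\z$ lies on both cone boundaries, and two congruent cone surfaces translated along a common ruling meet only in that ruling); your algebraic substitute --- signs $\sigma_k$ in $\langle e_k,\n_{\pi}\rangle=\sigma_k\cos(\varepsilon_{\pi})\|e_k\|$ cannot all agree, yielding $\|e_i\|+\|e_j\|=\|e_k\|$ and hence equality in the triangle inequality --- is clean and makes explicit where $\cos(\varepsilon_{\pi})>0$, i.e.\ the non-planarity of $\Lambda$, enters; the paper leaves that implicit. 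Second, and more substantively, your linearized dichotomy actually repairs a slip in the paper's write-up: the paper chooses $r$ so that $\mathrm{int}(B_r(\z)\cap C_{\alpha,\vv}(\z))\subseteq\mathrm{int}(C_{\varepsilon_{\pi},\n_{\pi}}(\x)\cap C_{\varepsilon_{\pi},\n_{\pi}}(\y))$, but when $\z$ lies \emph{between} $\x$ and $\y$ the relevant solid nappes of the two cones are convex bodies supported by the common tangent plane $V$ on \emph{opposite} sides, so near $\z$ their intersection is contained in $V$ and the displayed inclusion cannot hold for any nonempty open set. What is true --- and what your first-order expansion proves, your critical plane $\left\{\underline{w}:\langle\underline{w}-m,\,u-\tan(\varepsilon_{\pi})\n_{\pi}\rangle=0\right\}$ being precisely this tangent plane $V$ --- is that every point of the transversal cone $C_{\alpha,\vv}(m)$ off $V$ falls into $\mathrm{int}\,C_{\varepsilon_{\pi},\n_{\pi}}(o_1)$ or $\mathrm{int}\,C_{\varepsilon_{\pi},\n_{\pi}}(o_2)$ according to its side (the union, not the intersection), and either case contradicts \eqref{eempty}; the quantitative domination of the linear term is legitimate since the cone condition gives $|\langle\underline{w}-m,\vv\rangle|\geq\cos(\alpha)\|\underline{w}-m\|$ while the error is exactly quadratic. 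Your deduction of the line statement from the three-pair statement (parallel differences give the same minimal angle, hence all three pairs lie in $\Gamma_{\pi}$) fills in the paper's unexplained ``Thus'' and is correct.
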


\begin{proof}
	Let us suppose that $(\x,\y),(\x,\z),(\y,\z)\in\Gamma_{\pi}$. It is easy to see that $\x,\y,\z$ must be contained in one line. Indeed, $\z$ must be a common element of the boundary of the cones $C_{\varepsilon_{\pi},\n_{\pi}}(\x)$ and $C_{\varepsilon_{\pi},\n_{\pi}}(\y)$, see Figure~\ref{fig:conepic}. Without loss of generality, assume that $\z$ is between $\x$ and $\y$. Let $V$ be the common tangent plane of the cones $C_{\varepsilon_{\pi},\n_{\pi}}(\x)$ and $C_{\varepsilon_{\pi},\n_{\pi}}(\y)$, and let $\vv$ be its normal vector. Applying Lemma~\ref{lnotinplane}, there exists an $\pi/2>\alpha>0$ that $\mathrm{int}(B_r(\z)\cap C_{\alpha,\vv}(\z))\cap\Lambda\neq\emptyset,$ for every $r>0$. Let $r>0$ be sufficiently small that $\mathrm{int}(B_r(\z)\cap C_{\alpha,\vv}(\z))\subseteq\mathrm{int}(C_{\varepsilon_{\pi},\n_{\pi}}(\x)\cap C_{\varepsilon_{\pi},\n_{\pi}}(\y))$. Then
	$$\emptyset\neq\mathrm{int}(B_r(\z)\cap C_{\alpha,\vv}(\z))\cap\Lambda\subseteq\mathrm{int}(C_{\varepsilon_{\pi},\n_{\pi}}(\x))\cap\Lambda.$$
	But by \eqref{eempty}, $\mathrm{int}(C_{\varepsilon_{\pi},\n_{\pi}}(\x))\cap\Lambda=\emptyset$, which is a contradiction.
\end{proof}

\begin{prop}\label{pgoodmeasure}
	Let $\Lambda$ be a HSS set in $\R^3$ such that $\dim_H\Lambda>1$ and it is not contained in any plane. Then there exists an orthogonal projection $\pi\in\Pi_{3,2}$ and a self-similar measure $\mu$ such that $\spt\mu\subseteq\Lambda$, $\spt\mu$ is not contained in any plane, $\pi\mu$ satisfies the SSC (but not w.r.t the IFS generating $\mu$), and $\dim_H\mu>\dim_H\pi\mu>1$.
\end{prop}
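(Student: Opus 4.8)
The plan is to produce $\mu$ and $\pi$ by engineering a single \emph{exact overlap} in the projected system: this forces a strict dimension drop while leaving the strong separation condition intact for the \emph{reduced} IFS (the one obtained after merging the two coincident maps), which is precisely the meaning of the parenthetical ``not w.r.t.\ the IFS generating $\mu$''. First I would pass to a homogeneous subsystem. By Proposition~\ref{psubsystem}, for $\varepsilon>0$ small enough that $\dim_H\Lambda-\varepsilon>1$ there is an HHSS set $\Lambda_0\subseteq\Lambda$ with SSC, generated by $\Phi_0=\{f_i(\x)=\lambda\x+\underline{t}_i\}_{i=1}^{q}$ of common ratio $\lambda$, with $\dim_H\Lambda_0>1$. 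Since $\Lambda$ is not contained in a plane and the maps of $\Phi_0$ are compositions of maps of $\Phi$, I would additionally arrange (by adjoining a few cylinders with non-coplanar fixed points) that $\Lambda_0$ is not contained in any plane. Let $\mu=\mu_0$ be the equidistributed self-similar measure on $\Lambda_0$, so $\spt\mu=\Lambda_0\subseteq\Lambda$ is not in a plane and $s:=\dim_H\mu=\dim_H\Lambda_0=\frac{\log q}{-\log\lambda}>1$.

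The heart of the construction is the choice of $\pi$. For a large $N$, consider $\Phi_0^N=\{f_{\il}\}_{|\il|=N}$, whose maps are the homotheties $\x\mapsto\lambda^N\x+f_{\il}(\underline{0})$. I would choose two distinct words $\il_a\neq\il_b$ of length $N$, set $\n:=P_3\!\big(f_{\il_a}(\underline{0})-f_{\il_b}(\underline{0})\big)$, and take $\pi\in\Pi_{3,2}$ to be the orthogonal projection onto $\n^\perp$. Since $\pi f_{\il}(\x)=\lambda^N\pi\x+\pi f_{\il}(\underline{0})$ and $f_{\il_a}(\underline{0})-f_{\il_b}(\underline{0})$ is parallel to $\n$, the two projected maps coincide \emph{exactly}, $\pi f_{\il_a}=\pi f_{\il_b}$ on $\n^\perp$. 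Hence $\pi\mu$ is self-similar for $\pi\Phi_0^N$, which has a repeated map, i.e.\ for the reduced IFS in which the merged map carries weight $2q^{-N}$ and the remaining $q^N-2$ maps carry weight $q^{-N}$.

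Assuming this reduced IFS satisfies the SSC (addressed below), the entropy--Lyapunov formula for self-similar measures with SSC gives $\dim_H\pi\mu=\frac{H}{-N\log\lambda}$ with $H=N\log q-2q^{-N}\log2$, that is $\dim_H\pi\mu=s-\frac{2\log2}{N\,q^{N}\,(-\log\lambda)}=:s-\varepsilon_N$. Since $\varepsilon_N\to0$ and $s>1$, choosing $N$ large yields $1<s-\varepsilon_N<s$, so $\dim_H\mu>\dim_H\pi\mu>1$ as required; the strict drop $\dim_H\mu-\dim_H\pi\mu=\varepsilon_N>0$ is exactly the positive-dimensional fibre predicted by dimension conservation (Theorem~\ref{tdincon&lower}). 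By construction the SSC of $\pi\mu$ holds for the reduced IFS but fails for $\pi\Phi_0^N$ itself (two of its maps are identical), matching the statement.

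The main obstacle is guaranteeing that the reduced projected system genuinely satisfies the SSC, i.e.\ that the \emph{only} coincidence introduced by $\pi$ is the designed one. Because $\n$ is forced to be parallel to the chord $f_{\il_a}(\underline{0})-f_{\il_b}(\underline{0})$, I cannot perturb $\n$; instead I would use the freedom in the pair $(\il_a,\il_b)$ and in $N$. The chord directions $\{P_3(f_{\il}(\underline{0})-f_{\jl}(\underline{0}))\}$ proliferate and spread out as $N$ grows, and I would select a pair whose direction is parallel to at most $o(q^N)$ other length-$N$ chords, avoiding in particular the ``propagating'' level-one directions $P_3(\underline{t}_i-\underline{t}_j)$, for which swaps would create exponentially many coincidences and collapse the dimension below $1$. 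After deleting from $\Phi_0^N$ the sparse set of words responsible for any residual overlaps---which alters the dimension by at most $o(1)$ and can be done while keeping the support non-coplanar and retaining the pair $\il_a,\il_b$---the surviving reduced system has the SSC, and the estimate above still gives $1<\dim_H\pi\mu<\dim_H\mu$. Verifying that such an isolated chord direction exists (ruling out, via non-coplanarity of $\Lambda_0$, that every direction is parallel to too many chords) and that the pruning preserves both non-coplanarity of $\spt\mu$ and the bound $\dim_H\pi\mu>1$ is the step requiring the most care.
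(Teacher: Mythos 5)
Your overall strategy --- force an exact overlap in the projected system by projecting along a chord direction, so that $\pi\mu$ becomes self-similar for a reduced IFS with one merged map --- is in fact the endgame of the paper's own proof. But there is a genuine gap exactly where you flag it: nothing in your argument guarantees that projecting along the chord you pick keeps dimension above $1$, and your proposed fix (choose a chord parallel to at most $o(q^N)$ other chords, then prune) does not work. Overlaps of projected cylinders are caused by \emph{near}-parallel chords at scale $\lambda^N$, not exactly parallel ones: two length-$N$ cylinders collide under $\pi$ as soon as the chord between their base points makes an angle $O(\lambda^N/d)$ with $\n$, where $d$ is the chord length. Since $\dim_H\Lambda_0>1$ we have $q\lambda>1$, so the $q^{2N}$ chord directions populate the sphere with average occupancy $(q\lambda)^{2N}\to\infty$ per cap of radius $\lambda^N$; you have no upper bound certifying that \emph{your} cap is sparse, and the chord directions may well lie in the exceptional set of Proposition~\ref{texcept}, which can be dense in $S^2$ and is only known to have dimension at most $1$. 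Consequently the claim that pruning the offending words changes the dimension by $o(1)$ is unsupported, and with it the key inequality $\dim_H\pi\mu>1$. (A smaller issue: Proposition~\ref{psubsystem} does not by itself give a non-coplanar subsystem, as you note, but that is repairable; the separation/dimension step is not, as sketched.)

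The paper closes exactly this hole with a geometric mechanism your proposal is missing. It first fixes a Marstrand-good $\pi_1\in\Pi_{3,2}$ and takes the chord $(\x_i,\y_j)\in\Gamma_{\pi_1}$ realizing the \emph{extremal} angle $\varepsilon_{\pi_1}$ of \eqref{eepszilon}; the double-cone lemmas (Lemma~\ref{lnotinplane} and Lemma~\ref{lproj2to1hoz}, which is where non-coplanarity enters) show that the projection $\pi_2$ along this extremal chord is at most $2$-to-$1$ on $\Lambda^1$, whence $\dim_H\pi_2\Lambda^1=\dim_H\Lambda^1>1$ by Feng--Hu, with no SSC and no chord counting needed. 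It then deletes the two cylinders containing $\x_i,\y_j$, uses the lower semicontinuity statement \eqref{elowersemi} to retain the dimension bound for all nearby projections, replaces the chord by a nearby chord of two \emph{fixed points} $\mathrm{Fix}(f_{\il\hbar_1})$, $\mathrm{Fix}(f_{\jl\hbar_2})$ so that the overlap is exact (this part matches your construction), and --- crucially --- regains SSC for the projected system not by pruning near-parallel chords but by applying Proposition~\ref{psubsystem} \emph{to the projected IFS} $\pi'\widetilde{\Phi}$, afterwards adjoining high powers $f_{\il\hbar_1}^k,f_{\jl\hbar_2}^k$ whose common projected fixed point lies outside $\pi'\widetilde{\Lambda}$, so that separation holds for large $k$ by an open condition rather than a counting argument. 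Until you can substitute something for the extremal-chord/$2$-to-$1$ step --- or otherwise certify $\dim_H\pi\mu>1$ in the direction your construction forces on you --- the proposal does not yield the proposition.
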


\begin{proof}
	Let $\Lambda$ be a HSS set satisfying the assumptions. By Marstrand's projection theorem \cite[Corollary~9.4, Corollary~9.8]{M} there exists a $\pi_1\in\Pi_{3,2}$ such that $\dim_H\pi_1\Lambda=\min\left\{2,\dim_H\Lambda\right\}$. The set $\pi_1\Lambda$ is a HSS set in $\R^2$. Applying Proposition~\ref{psubsystem}, there exists a HHSS set $\Lambda^1$ and an IFS $\Phi_1=\left\{f_i(\x)=\lambda\x+\underline{t}_i\right\}_{i=1}^q$ with SSC such that $\Lambda^1\subseteq\Lambda$, $\pi_1\Lambda^1$ satisfies the SSC and $2>\dim_H\Lambda^1=\dim_H\pi_1\Lambda^1>1$.
	
	Let $\varepsilon_{\pi_1}$ be defined in \eqref{eepszilon}. By compactness, there are $\x_i\in f_i(\Lambda^1)$, $\y_j\in f_j(\Lambda^1)$ such that $i\neq j$ and $(\x_i,\y_j)\in\Gamma_{\pi_1}$. Let us fix such $i\neq j$ and $\x_i,\y_j$. Denote the projection onto the subspace with normal vector $\frac{\x_i-\y_j}{\|\x_i-\y_j\|}$ by $\pi_2$. Then by Lemma~\ref{lproj2to1hoz}, the projection $\pi_2$ is 2 to 1 on $\Lambda^1$. Thus, by \cite[Corollary~4.16]{FH}, $\dim_H\Lambda^1=\dim_H\pi_2\Lambda^1$, but clearly, the SSC does not hold.
	
	Let $\delta>0$ be sufficiently small such that $\dim_H\Lambda^1-3\delta>1$. Let us fix $\il,\jl\in\Sigma^*$ such that $|\il|=|\jl|$, $\x_i\in f_{\il}(\Lambda^1)$, $\y_j\in f_{\jl}(\Lambda^1)$ and choose $m:=|\il|=|\jl|$ sufficiently large that the attractor $\widetilde{\Lambda}$ of the IFS~$\widetilde{\Phi}:=\left\{f_{i_0}\circ\cdots\circ f_{i_{m-1}}\right\}_{i_0,\dots,i_{m-1}=1}^q\backslash\left\{f_{\il},f_{\jl}\right\}$ satisfies $\dim_H\widetilde{\Lambda}\geq\dim_H\Lambda^1-\delta$. Since $\pi_2$ is still at most 2 to 1 on the smaller set $\widetilde{\Lambda}$, we have $\dim_H\widetilde{\Lambda}=\dim_H\pi_2\widetilde{\Lambda}$. Let us observe that $\pi_2\x_i=\pi_2\y_j\notin\pi_2\widetilde{\Lambda}$.
	
	Let $\widetilde{\mu}$ be the natural HSS measure on $\widetilde{\Lambda}$. By Theorem~\ref{tdincon&lower}\eqref{elowersemi} the function $\pi\mapsto\dim_H\pi\widetilde{\mu}$ is lower semi-continuous at $\pi_2$. Hence, $\pi\mapsto\dim_H\pi\widetilde{\Lambda}$ is lower semi-continuous at $\pi_2$. Let $\beta>0$ sufficiently small such that for every projection $\pi\in\Pi_{3,2}$, with $\|\n_{\pi}\times\n_{\pi_2}\|<|\sin(\beta)|$, $\pi\x_i,\pi\y_j\notin\pi\widetilde{\Lambda}$ and
	$$\dim_H\pi\widetilde{\Lambda}\geq\dim_H\pi_2\widetilde{\Lambda}-\delta=\dim_H\widetilde{\Lambda}-\delta.$$
	
Since the fixed points of the iterates of the functions are dense in $\widetilde{\Lambda}$, by compactness, we may find $\hbar_{1},\hbar_{2}\in\Sigma^*$ that $\pi(\mathrm{Fix}(f_{\il\hbar_1})),\pi(\mathrm{Fix}(f_{\jl\hbar_2}))\notin\pi\widetilde{\Lambda}$ for every projection $\pi\in\Pi_{3,2}$,  with $\|\n_{\pi}\times\n_{\pi_2}\|<|\sin(\beta)|$ and	
	$$\frac{\|(\mathrm{Fix}(f_{\il\hbar_1})-\mathrm{Fix}(f_{\jl\hbar_2}))\times(\x_i-\y_j)\|}{\|\mathrm{Fix}(f_{\il\hbar_1})-\mathrm{Fix}(f_{\jl\hbar_2})\|\|\x_i-\y_j\|}<|\sin(\beta)|.$$

	Denote the projection onto the subspace with normal vector $\frac{\mathrm{Fix}(f_{\il\hbar_1})-\mathrm{Fix}(f_{\jl\hbar_2})}{\|\mathrm{Fix}(f_{\il\hbar_1})-\mathrm{Fix}(f_{\jl\hbar_2})\|}$ by $\pi'$.  Applying Proposition~\ref{psubsystem} for $\pi'\widetilde{\Phi}$, there exist a HHSS set $\widetilde{\Lambda}^1$ and an IFS $\widetilde{\Phi}_1$ with SSC such that $\widetilde{\Lambda}^1\subseteq\widetilde{\Lambda}$, $\pi'\widetilde{\Lambda}^1$ satisfies the SSC and $\dim_H\widetilde{\Lambda}^1=\dim_H\pi'\widetilde{\Lambda}^1>\dim_H\pi'\widetilde{\Lambda}-\delta$.
	
	We claim that there exist $m,k\geq1$ that the IFS $\left(\pi'\widetilde{\Phi}_1\right)^m\cup\left\{\overbrace{\pi'f_{\il\hbar_1}\circ\cdots\circ\pi'f_{\il\hbar_1}}^k=:\pi'f_{\il\hbar_1}^k\right\}$ satisfies the SSC and it is homogeneous.
	
	Indeed, since the system $\pi'\widetilde{\Phi}_1$ satisfies SSC and is homogeneous, then for every $m\geq1$ $\left(\pi'\widetilde{\Phi}_1\right)^m$ still satisfies SSC and is homogeneous. By Proposition~\ref{psubsystem}, the contraction ratio of $\widetilde{\Phi}_1$ is $\lambda^l$ for an $l\geq1$. On the other hand, the contraction ratio of $f_{\il\hbar_1}$ is $\lambda^{|\il\hbar_1|}$. Now, let us fix the ratio $k/m=l/|\il\hbar_1|$. Since $\pi'(\mathrm{Fix}(f_{\il\hbar_1})),\pi'(\mathrm{Fix}(f_{\jl\hbar_2}))\notin\pi'\widetilde{\Lambda}$, by choosing $k$ sufficiently large, the SSC holds.
	
	Let $\Phi':=(\widetilde{\Phi}_1)^m\cup\left\{f_{\il\hbar_1}^k,f_{\il\hbar_2}^k\right\}$ and its attractor $\Lambda'$. Observe that $\pi'(\mathrm{Fix}(f_{\il\hbar_1}))=\pi'(\mathrm{Fix}(f_{\jl\hbar_2}))$. Thus, $\pi'f_{\il\hbar_1}\equiv\pi'f_{\il\hbar_2}$, i.e. there are exact overlaps. Hence, $\pi'\Phi'=\left(\pi'\widetilde{\Phi}_1\right)^m\cup\left\{\pi'f_{\il\hbar_1}^k\right\}$ and therefore, satisfies SSC.
	
	Let $\Lambda'$ be the attractor of $\Phi'$. Then
	$$\dim_H\pi'\Lambda'\geq\dim_H\pi'\widetilde{\Lambda}^1\geq\dim_H\pi'\widetilde{\Lambda}-\delta\geq\dim_H\pi_2\widetilde{\Lambda}-2\delta=\dim_H\widetilde{\Lambda}-2\delta\geq\dim_H\Lambda^1-3\delta>1.$$
	
	 Let $\mu'$ be the HHSS measure on $\Lambda'$ with weights $\frac{1}{\sharp\left(\widetilde{\Phi}_1\right)^m+1}$ for the functions in $\left(\widetilde{\Phi}_1\right)^m$ and weights $\frac{1}{2(\sharp\left(\widetilde{\Phi}_1\right)^m+1)}$ for the functions $f_{\il\hbar_1}^k,f_{\il\hbar_2}^k$. Thus, $\pi'\mu'$ is the natural self-similar measure on $\pi'\Lambda'$ and therefore, $1<\dim_H\pi'\Lambda'=\dim_H\pi'\mu'$. Because of the exact overlap and the fact that $\spt\pi'\mu'=\pi'\Lambda'$ cannot be contained in a line, $\spt\mu'$ cannot be contained in a plane. The exact overlap and $\dim_H\mu'\leq\dim_H\Lambda^1<2$ imply $\dim_H\mu'>\dim_H\pi'\mu'$,  which had to be proven.
\end{proof}

By changing the coordinates, without loss of generality we may assume that the projection in Proposition~\ref{pgoodmeasure} is a coordinate projection $\pi:(x,y,z)\mapsto(x,y)$. Moreover, since the measure $\mu$ in Proposition~\ref{pgoodmeasure} cannot be contained in any plane, we may assume that $\spt\mu$ is supported on an octant, separated away from the $z$ axis by restricting $\mu$ to a cylinder set.


Let us denote the projection along geodesics on $S^2$ to $S^1$ by $\gamma$. 
We note that $\gamma$ is well defined except on the poles. On the other hand,
$\gamma\circ P_3=P_2\circ\pi$.

Let $\nu:=(P_3)_*\mu$. Thus, $\gamma_*\nu=(P_2)_*\pi\mu$. By convenience, we use the  cylindrical coordinates in $\R^3$ and the radial coordinates on $\R^2$. That is, for $\R^3\ni\x=(r,\varphi,z)$, $\pi(\x)=(r,\varphi)$, $P_2(\pi(\x))=\varphi=\gamma(P_3(\x))$. Let us denote the conditional measures of $\mu$ on $\pi^{-1}(r,\varphi)$ by $\mu_{\pi^{-1}(r,\varphi)}$, the conditional measures of $\pi\mu$ on $P_2^{-1}(\varphi)$ by $\pi\mu_{P_2^{-1}(\varphi)}$, and the conditional measures of $\nu$ on $\gamma^{-1}(\varphi)$ by $\nu_{\gamma^{-1}(\varphi)}$, see Figure~\ref{fig:proj}.

\begin{figure}
	\centering
	\includegraphics[width=0.4\linewidth]{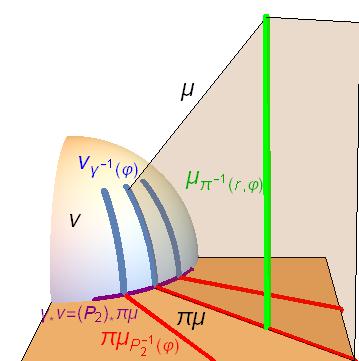}
	\caption{The conditional and projected measures along $P_2, P_3$ and $\gamma$.}\label{fig:proj}
\end{figure}

\begin{lemma}\label{lcondmeasure}
	For $\gamma_*\nu$-almost every $\varphi\in S^1$, $\dim_H\nu_{\gamma^{-1}(\varphi)}\geq\dim_H\mu-\dim_H\pi\mu>0$.
\end{lemma}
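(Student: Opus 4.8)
The plan is to deduce the fibre estimate from dimension conservation for the linear coordinate projection $\pi$, combined with the compatibility of conditional measures under the pushforward $P_3$ and the tower structure $\gamma\circ P_3=P_2\circ\pi$.

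First I would record the relevant disintegration identity. Since $\gamma\circ P_3=P_2\circ\pi$ and $\gamma_*\nu=(P_2)_*\pi\mu$, the fibre $(\gamma\circ P_3)^{-1}(\varphi)$ is the half-plane $H_\varphi=\pi^{-1}(P_2^{-1}(\varphi))$. Disintegrating $\mu$ over the map $\gamma\circ P_3$ (which is defined $\mu$-a.e. because $\spt\mu$ avoids the $z$-axis, hence $P_3(\spt\mu)$ avoids the poles of $\gamma$) gives $\mu=\int\mu_{H_\varphi}\,d\gamma_*\nu(\varphi)$. Pushing forward by $P_3$ and using that $(P_3)_*\mu_{H_\varphi}$ is carried by the meridian $\gamma^{-1}(\varphi)$, uniqueness of disintegration yields $\nu_{\gamma^{-1}(\varphi)}=(P_3)_*\mu_{H_\varphi}$ for $\gamma_*\nu$-a.e.\ $\varphi$. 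Refining $\mu_{H_\varphi}$ by the finer projection $\pi$ (the tower property of conditional measures along $\pi$ followed by $P_2$) gives $\mu_{H_\varphi}=\int\mu_{\pi^{-1}(r,\varphi)}\,d(\pi\mu)_{P_2^{-1}(\varphi)}(r)$, and therefore
$$\nu_{\gamma^{-1}(\varphi)}=\int(P_3)_*\mu_{\pi^{-1}(r,\varphi)}\,d(\pi\mu)_{P_2^{-1}(\varphi)}(r).$$

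Next I would compute the dimension of each component. Since $\mu$ is a HHSS measure with SSC in $\R^3$ and $\pi\in\Pi_{3,2}$ is the coordinate projection $(x,y,z)\mapsto(x,y)$, Theorem~\ref{tdincon&lower} gives $\dim_H\mu_{\pi^{-1}(r,\varphi)}=\dim_H\mu-\dim_H\pi\mu$ for $\pi\mu$-a.e.\ $(r,\varphi)$. The fibre $\pi^{-1}(r,\varphi)$ is a vertical line, and because $\spt\mu$ is bounded and separated from the $z$-axis, the restriction $z\mapsto(r,\varphi,z)/\sqrt{r^2+z^2}$ of $P_3$ to this line has derivative $r/(r^2+z^2)$ bounded away from $0$ and $\infty$, so it is bi-Lipschitz onto an arc of $\gamma^{-1}(\varphi)$. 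As bi-Lipschitz maps preserve the Hausdorff dimension of a measure, $\dim_H(P_3)_*\mu_{\pi^{-1}(r,\varphi)}=\dim_H\mu-\dim_H\pi\mu$ for $\pi\mu$-a.e.\ $(r,\varphi)$, i.e.\ for $\gamma_*\nu$-a.e.\ $\varphi$ and $(\pi\mu)_{P_2^{-1}(\varphi)}$-a.e.\ $r$.

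Finally I would invoke a mixture bound: if $\eta=\int\eta_i\,d\rho(i)$ then $\dim_H\eta\geq\rho\text{-}\essinf_i\dim_H\eta_i$, because any $A$ with $\eta(A)>0$ satisfies $\eta_i(A)>0$ on a set of positive $\rho$-measure, whence $\dim_HA\geq\dim_H\eta_i\geq\essinf_i\dim_H\eta_i$, and one takes the infimum over such $A$. Applying this to the integral representation above gives
$$\dim_H\nu_{\gamma^{-1}(\varphi)}\geq\essinf_r\dim_H(P_3)_*\mu_{\pi^{-1}(r,\varphi)}=\dim_H\mu-\dim_H\pi\mu$$
for $\gamma_*\nu$-a.e.\ $\varphi$, with the strict positivity coming from $\dim_H\mu>\dim_H\pi\mu$ established in Proposition~\ref{pgoodmeasure}. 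The main obstacle is the first paragraph: making rigorous the disintegration identity $\nu_{\gamma^{-1}(\varphi)}=(P_3)_*\mu_{H_\varphi}$ and its refinement through the two-step projection, together with the bi-Lipschitz property of $P_3$ on the vertical fibres, which is precisely where the hypothesis that $\spt\mu$ avoids the $z$-axis enters; the dimension-conservation input and the mixture inequality are then routine.
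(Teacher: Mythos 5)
Your proposal is correct and follows essentially the same route as the paper: you derive the identity $\nu_{\gamma^{-1}(\varphi)}=\int(P_3)_*\mu_{\pi^{-1}(r,\varphi)}\,d\pi\mu_{P_2^{-1}(\varphi)}(r)$ by uniqueness of disintegration (the paper's equation \eqref{econdmeas}), apply Theorem~\ref{tdincon&lower}\eqref{edimc} together with the bi-Lipschitz property of $P_3$ on the vertical fibres, and conclude with a mixture lower bound. Your only packaging difference is stating the mixture inequality as a general lemma, which is exactly the content of the paper's explicit $A_{\varphi,n}$, $B_{\varphi,n}$ approximation argument (and in this application the fibre dimensions are a.e.\ constant, so the implicit measurability issue in your general statement is harmless).
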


\begin{proof}
	By definition of conditional measures $\nu=\int\nu_{\gamma^{-1}(\varphi)}d\gamma_*\nu(\varphi)$. On the other hand, $\pi\mu=\int\pi\mu_{P_2^{-1}(\varphi)}d\gamma_*\nu(\varphi)$ and thus, $\mu=\int\mu_{\pi^{-1}(r,\varphi)}d\pi\mu(r,\varphi)=\iint\mu_{\pi^{-1}(r,\varphi)}d\pi\mu_{P_2^{-1}(\varphi)}(r)d\gamma_*\nu(\varphi)$. Hence,
	$$
	\nu=(P_3)_*\mu=\iint(P_3)_*\mu_{\pi^{-1}(r,\varphi)}d\pi\mu_{P_2^{-1}(\varphi)}(r)d\gamma_*\nu(\varphi).
	$$
	Since the conditional measures are uniquely defined up to a zero measure set
	\begin{equation}\label{econdmeas}
	\nu_{\gamma^{-1}(\varphi)}=\int(P_3)_*\mu_{\pi^{-1}(r,\varphi)}d\pi\mu_{P_2^{-1}(\varphi)}(r)\text{ for $\gamma_*\nu$-almost every $\varphi$.}
	\end{equation}
	Let us observe that for any compact line segment $I\subset\R^3$ which is not contained in any $1$ dimensional subspace of $\R^3$ the map $P_3:I\mapsto S^2$ is bi-Lipschitz. Hence, by Theorem~\ref{tdincon&lower}\eqref{edimc} and Proposition~\ref{pgoodmeasure} $$\dim_H(P_3)_*\mu_{\pi^{-1}(r,\varphi)}=\dim_H\mu_{\pi^{-1}(r,\varphi)}=\dim_H\mu-\dim_H\pi\mu>0\text{ for $\pi\mu$-a.e. $(r,\varphi)$.}$$
	By using the definition of Hausdorff dimension, let $A_{\varphi,n}$ be the set such that $\nu_{\gamma^{-1}(\varphi)}(A_{\varphi,n})>0$ and\linebreak $\dim_H\nu_{\gamma^{-1}(\varphi)}\geq\dim_HA_{\varphi,n}-\frac{1}{n}$. Thus, by \eqref{econdmeas} for $\gamma_*\nu$-a.e. $\varphi$ there exists a set $B_{\varphi,n}$ that
	$\pi\mu_{P_2^{-1}(\varphi)}(B_{\varphi,n})>0$ and for $\pi\mu_{P_2^{-1}(\varphi)}$-a.e. $r\in B_{\varphi,n}$
	$$(P_3)_*\mu_{\pi^{-1}(r,\varphi)}(A_{\varphi,n})>0.$$
	Hence,
	$$\dim_H\nu_{\gamma^{-1}(\varphi)}+\frac{1}{n}\geq\dim_HA_{\varphi,n}\geq\dim_H(P_3)_*\mu_{\pi^{-1}(r,\varphi)}=\dim_H\mu-\dim_H\pi\mu>0\text{ for $\gamma_*\nu$-a.e. $\varphi$.}$$
	Since $n$ was arbitrary, the proof is complete.
\end{proof}

\begin{proof}[Proof of Theorem~\ref{tradproj3}]
	Let $\mu$ and $\pi$ be as in Proposition~\ref{pgoodmeasure}. Since $\pi\mu$ is a HHSS measure satisfying SSC, we can apply Corollary~\ref{cradproj} and therefore, $$\dim_H\gamma_*\nu=\dim_H(P_2)_*\pi\mu=\min\left\{1,\dim_H\pi\mu\right\}=1.$$
	By Lemma~\ref{lcondmeasure}
	$$ 
	\dim_H\nu_{\gamma^{-1}(\varphi)}\geq\dim_H\mu-\dim_H\pi\mu>0.$$
	Thus, by \cite[Lemma~6.13]{H1}
	$$\dim_H(P_3)_*\mu=\dim_H\nu\geq\dim_H\gamma_*\nu+\dim_H\nu_{\gamma^{-1}(\varphi)}>1.$$
\end{proof}

\section{Proof of the main theorems}

\indent

In this section we show the remaining proofs.

\begin{proof}[Proof of Theorem~\ref{tmain}]
	Let $\Lambda$ be an HSS set in $\R^3$ such that it is not contained in any plane and $\dim_H\Lambda>1$. Moreover, let $g:\R^3\mapsto\R$ be a $C^1$ function satisfying the assumptions~\eqref{tcond1}-\eqref{tcond3}.  Since $\Lambda$ is compact, there exists an open neighbourhood of $\Lambda$ that $\|\nabla_{\x}g\|>0$ on the neighbourhood. By considering a sufficiently small cylinder of $\Lambda$ we may assume that there exists a ball $B$ that $\Lambda\subseteq B$ and $\|\nabla_{\x}g\|>0$ for every $\x\in B$. Let $f:\x\in B\mapsto\frac{\nabla_{\underline{x}}g}{\|\nabla_{\underline{x}}g\|}$. By assumption~\eqref{tcond2}, for every $t\in\R$ such that $t\cdot\x\in B$, $f(\x)=f(t\cdot\x)$. Thus, by assumption~\eqref{tcond3}, for any $\mu$ HSS measure with $\spt\mu\subseteq\Lambda$
	$$\dim_Hf_*\mu=\dim_H(P_3)_*\mu.$$
	
	It is enough to show the lower bound. Let $\mu$ be the HHSS measure as in Theorem~\ref{tradproj3}. Then by Theorem~\ref{tHochman}
	$$\dim_Hg(\Lambda)\geq\dim_Hg_*\mu\geq\mu-\essinf_{\x}\dim_H\pi_{g,\x}\mu,$$
	where we recall that $\pi_{g,\x}(\underline{y})=\frac{<\nabla_{\x}g,\underline{y}>}{\|\nabla_{\x}g\|}$.
	By Proposition~\ref{texcept}
	$$\dim_H\left\{\n\in S^2:\dim_H\pi_{\n}\mu<1\right\}\leq1.$$
	But by Theorem~\ref{tradproj3} $\dim_Hf_*\mu=\dim_H(P_3)_*\mu>1$, thus,
	$$f_*\mu(\left\{\n\in S^2:\dim_H\pi_{\n}\mu<1\right\})=0.$$
	And therefore $\mu-\essinf_{\x}\dim_H\pi_{g,\x}\mu=1$.
\end{proof}

\begin{proof}[Proof of Theorem~\ref{tdistance}]
	If $\Lambda$ is contained in a plane then we refer to Corollary~\ref{cdist} or \cite[Theorem~1.2]{O}. So we may assume that $\Lambda$ is not contained in any plane.
	
	By shifting $\Lambda$ we may assume that $\underline{0}\in\Lambda$. Let $\Lambda_{\il}$ be a cylinder set such that $\mathrm{dist}(\underline{0},\Lambda_{\il})>0$. Then $g(\x):=\|\x\|$ satisfies the conditions of Theorem~\ref{tmain} with self-similar set $\Lambda_{\il}$. Thus,
		$$\dim_HD_{\underline{0}}(\Lambda)\geq\dim_Hg(\Lambda)\geq\dim_Hg(\Lambda_{\il})=\min\left\{1,\dim_H\Lambda_{\il}\right\}=\min\left\{1,\dim_H\Lambda\right\}=1.
		$$
\end{proof}

\begin{proof}[Proof of Theorem~\ref{tprod}]
	Let $\Lambda$ be an arbitrary self-similar set on $\R$ that $\dim_H\Lambda>1/3$ with IFS~$\left\{\lambda_i x+t_i\right\}_{i=1}^q$, where $\lambda_i\in(-1,1)$. By applying \cite[Proposition~6]{PS} there exists a self-similar set $\Lambda'\subseteq\Lambda$ in $\R$ that $\dim_H\Lambda'>1/3$ with IFS~$\left\{\lambda x+t_i'\right\}_{i=1}^{q'}$, where $\lambda\in(0,1)$. Since $\Lambda'$ is not a singleton, there exists a cylinder set $\Lambda'_{\il}$ such that every element of $\Lambda'_{\il}$ is either strictly positive or strictly negative.
	
	It is easy to see that $\Lambda'_{\il}\times\Lambda'_{\il}\times\Lambda'_{\il}$ is an HSS set in $\R^3$ separated away from planes determined by the axes. Thus it is contained in one of the octants. Moreover, $\dim_H\Lambda'_{\il}\times\Lambda'_{\il}\times\Lambda'_{\il}>1$.
	
	Let $g(x,y,z)=xyz$. Then
	$$\nabla_{\x}g=\left(\begin{array}{c}
	yz \\
	xz   \\
	xy
	\end{array}\right).
	$$
	It is easy to see that $\nabla_{\x}g$ satisfies the assumptions~\eqref{tcond1} and \eqref{tcond2} of Theorem~\ref{tmain} on $\Lambda'_{\il}\times\Lambda'_{\il}\times\Lambda'_{\il}$.
	
	To show that $g$ satisfies \eqref{tcond3} of Theorem~\ref{tmain}, observe that there exists an open, simply connected set $V$ in $S^2$ such that $P_3(\Lambda'_{\il})\subset V$ and $V$ is uniformly separated away from the planes $x=0,y=0,z=0$. Since $\x\mapsto\nabla_{\x}g$ is one-to-one on every open octant and $\det(H_{\x}g)=2xyz\neq0$ for any $(x,y,z)\in V$, where $H_{\x}g$ denotes the Hesse matrix of $g$, we get that $\x\mapsto\nabla_{\x}g$ is a diffeomorphism between $V$ and its image $\nabla_{V}g$. Now, let $(\varphi,\theta)\mapsto\x(\varphi,\theta)$ be the natural parametrization of $V$. Thus,
	\begin{multline*}
	\langle\frac{\partial}{\partial\varphi}\nabla_{\x}g\times\frac{\partial}{\partial\theta}\nabla_{\x}g,\nabla_{\x}g\rangle=\langle\det(H_{\x}g)\left((H_{\x}g)^T\right)^{-1}\frac{\partial\x}{\partial\varphi}\times\frac{\partial\x}{\partial\theta},\nabla_{\x}g\rangle=\\
	\langle\frac{\partial\x}{\partial\varphi}\times\frac{\partial\x}{\partial\theta},\det(H_{\x}g)\left((H_{\x}g)\right)^{-1}\nabla_{\x}g\rangle=\frac{1}{2}\det(H_{\x}g)\langle\frac{\partial\x}{\partial\varphi}\times\frac{\partial\x}{\partial\theta},\x\rangle\neq0
	\end{multline*}
	for every point $\x\in V$ Hence, the normal vector of $S^2$ at $\nabla_{\x}g/\|\nabla_{\x}g\|$ is uniformly transversal to the normal vector of $\nabla_{V}g$ at the point $\nabla_{\x}g$. Thus, $P_3$ is a diffeomorphism between $\nabla_{V}g$ and $P_3(\nabla_{V}g)$, and therefore $h_g$ is bi-Lipsitz. 
			
	Thus, by Theorem~\ref{tmain}
	$$\dim_H\Lambda\cdot\Lambda\cdot\Lambda\geq\dim_H\Lambda'_{\il}\cdot\Lambda'_{\il}\cdot\Lambda'_{\il}=\dim_Hg(\Lambda'_{\il}\times\Lambda'_{\il}\times\Lambda'_{\il})=1.$$
\end{proof}

\begin{remark}
	Unfortunately, our method does not allows us to prove similar statements if $\dim_H\Lambda\leq1$. The method depends on dimension of the exceptional directions of orthogonal projections from $\R^3$ to $\R$. By using Hochman's result Theorem~\ref{tHochman}
	$$\dim_Hg_*\mu\geq\mu-\essinf_{\underline{x}}\dim_H\pi_{g,\underline{x}}\mu.$$
	On the other hand, in the case self-similar sets
	\begin{equation*} 
	\dim_H\left\{\pi\in\Pi_{3,1}:\dim_H\pi\Lambda<\min\left\{1,\dim_H\Lambda\right\}\right\}\leq1,
	\end{equation*}
	see Theorem~\ref{texcept}. Hence, to prove that the dimension does not drop, it is enough to show that $\dim_Hf_*\mu>1$, where $f:\underline{x}\mapsto P_3(\nabla_{\underline{x}}g)$. However, it is not possible if $\dim_H\Lambda\leq1$ and in particular if $\dim_H\mu\leq1$.
\end{remark}
	
\begin{remark}
	Conditions~(2) and (3) in Theorem~\ref{tmain} imply that we have to check only that $\dim_H(P_3)_*\mu>1$. These conditions seems rather technical, and we conjecture that they can be replaced by some more natural condition.
\end{remark}

\begin{acknowledgement}
	The author would like to express his gratitude to the referee for the helpful comments.
\end{acknowledgement}

\end{document}